\newtheorem{thm}{Theorem}[section]
\newtheorem{cor}[thm]{Corollary}
\newtheorem{lem}[thm]{Lemma}
\newtheorem{prop}[thm]{Proposition}
\theoremstyle{definition}
\newtheorem{defn}[thm]{Definition}
\theoremstyle{remark}
\numberwithin{equation}{section}
\begin{document}
\title[G-frames]
 {G-frame representation and Invertibility of g-Bessel Multipliers}
\author{A. Abdollahi}%
\author{E. Rahimi}

\address{Department of Mathematics, College of Sciences, Shiraz University, Shiraz 71454, Iran}

\email{abdolahi@shirazu.ac.ir}%
\email{rahimie@shirazu.ac.ir}

\thanks{The first author was supported by a grant from the Shiraz university Research Council.}

%*******
\subjclass[2000]{Primary 42C15; Secondary 41A58}
 \keywords{g-frames,g-orthonormal basis, controlled
g-frames, weighted g-frames, g-frame multipliers}
%*******

\keywords{g-frames, g-orthonormal bases, g-Riesz bases, g-frame
multipliers, controlled g-frames,
weighted g-frames}%

\maketitle
\begin{abstract}
%********
In this paper we show that every g-frame for an \linebreak
infinite dimensional Hilbert space $\mathcal{H}$ can be written as
a sum of three g-orthonormal bases for $\mathcal{H}$. Also, we
prove that every g-frame can be represented as a linear
combination of two g-orthonormal bases if and only if it is a
g-Riesz basis. Further, we show each g-Bessel multiplier is a
Bessel multiplier and investigate the inversion of g-frame
multipliers. Finally, we introduce the concept of controlled
g-frames and weighted g-frames and show that the sequence induced
by each controlled g-frame (resp. weighted g-frame) is a
controlled frame (resp. weighted frame).
%********
\end{abstract}
%__________________________________________________________________
\section{Introduction}
Frames for a separable  Hilbert space  were first introduced in
1952 by Duffin and Schaeffer \cite{R. J. Duffin.52}. In \cite{S.
Wenchang.06}, a generalization of the frame concept was
introduced. Sun introduced g-frames and  g-Riesz bases in a
complex Hilbert space and discussed some properties of them.
G-frames and g-Riesz bases in complex Hilbert spaces have some
properties similar to those of frames, Riesz bases, but not all
the properties are similar (see
\cite{S. Wenchang.06}). In this paper we generalize some results in  \cite{p.Balazs.09, p.Balazs.06, P.G.Casazza.98}, from frame theory to g-frames.\\
%********
Throughout this paper, $\mathcal{H}$ and $\mathcal{K}$ are
separable Hilbert spaces and $\{\mathcal{H}_{i}\}_{i \in J}
\subseteq \mathcal{K}$ is a sequence of separable Hilbert spaces,
where $J$ is a subset of $\mathbb{Z}$,
$\mathcal{L}(\mathcal{H},\mathcal{H}_{i})$ is the collection of
all bounded linear operators from $\mathcal{H}$ to
$\mathcal{H}_{i}$.
%_______________________________________________________________________________________________________________
For each sequence $\{\mathcal{H}_{i}\}_{i \in J}$, we define the
space $(\bigoplus_{i \in J} \mathcal{H}_{i})_{l_{2}}$ by

$$( \bigoplus_{i \in J} \mathcal{H}_{i})_{l_{2}}  =  \{ \{f_{i}\} _{i
\in J}:f_{i} \in \mathcal{H}_{i} ,~ i \in J~ and ~\sum_{i \in
J}\|f_{i}\|^{2} < \infty \}.$$ With the inner product defined by
$$\langle\{f_{i}\},\{g_{i}\}\rangle=\sum_{i \in J}\langle
f_{i},g_{i}\rangle,$$ it is clear that $(\bigoplus_{i \in J}
\mathcal{H}_{i})_{l_{2}}$ is a Hilbert space.\\

A {\it frame} for a complex Hilbert space $\mathcal{H}$ is a family of
vectors $\{f_{i}\}_{i \in J}$ so that there are two positive
constants A and B satisfying
$$A\|f\|^{2}\leq\sum_{i\in J}|\langle{f,f_{i}}\rangle|^{2}\leq B\|
f\|^{2}, f\in \mathcal{H}.$$ The constants A and B are called
{\it lower} and {\it upper frame bounds}.
The frame possesses many nice properties which makes it very useful in wavelet analysis, irregular sampling theory, signal processing and many other fields. We infer to \cite{R.Balan.P.G.Casazza.C.Heil.Z.Landau.2003, R.Balan.P.G.Casazza.C.Heil.Z.Landau.2006, O.Christensen.03, I.Daubecheies.A.Grossmann.Y.Meyer.1986, I.Daubechies.1992, K.Gröchenig.2001, R.Young.2001}. \\

\noindent The notion of frames has been generalized to g-frames by W. Sun (\cite{S. Wenchang.06}) in the following way:\\
%***Infer
%---------------------------------------------------------------------------------------------
A sequence $\{\Lambda_{i}\}_{i \in J}$ is called a {\it generalized frame}, or simply a {\it g-frame},
 for $\mathcal{H}$ with respect to $\{\mathcal{H}_{i}\}_{i \in J}$ if there exist two positive
constants A and B such that, for all $f\in \mathcal{H}$,
$$A\|f\|^{2}\leq\sum_{i\in J}\|\Lambda_{i}f\|^{2}\leq B\| f\|^{2}.$$
 The constants A and B are called the {\it lower} and {\it upper g-frame bounds},
respectively. The supremum of all such $A$ and the infimum of all
such $B$ are  called the optimal bounds. If $A=B$ we call this
g-frame a {\it tight g-frame} and if $A = B=1$ it is called a {\it normalized
tight g-frame}.  We say simply a g-frame for $\mathcal{H}$, and
denote by $\{\Lambda_{i}\}_{i \in J}$, whenever the space sequence
$\mathcal{H}_{i}$ and the index set $J$ are clear. If we only have
the upper bound, we call $\{\Lambda_{i}\}_{i \in J}$ a {\it g-Bessel
sequence} with bound B. We say that $\{\Lambda_{i}\}_{i \in J}$ is
{\it g-complete}, if $\bigcap_{i \in J} \{ f :\Lambda_{i} f=0\}=\{0\}$
and is called {\it g-orthonormal basis} for $\mathcal{H}$, if
$$\langle\Lambda_{i}^{\star}g_{i},\Lambda_{j}^{\star}g_{j}\rangle = \delta_{i,j}\langle g_{i},g_{j}\rangle, ~i,j \in J,~ g_{i}
\in \mathcal{H}_{i} ,~g_{j} \in \mathcal{H}_{j},$$ and
$$\sum_{i\in J}\|\Lambda_{i}f\|^{2}=\|f\|^{2}, f \in \mathcal{H}$$

We say that $\{\Lambda_{i}\}_{i \in J}$ is a {\it g-Riesz basis} for
$\mathcal{H}$, if it is g-complete and there exist constants $0 <A
\leq B < \infty$, such that for any finite subset $I\subseteq J$
and $g_{i}\in \mathcal{H}_{i}$, $ i \in I$,
$$A\sum_{i \in I}\|g_{i}\|^{2}\leq\|\sum_{i\in
I}\Lambda_{i}^{\star}g_{i}\|^{2}\leq B\sum_{i \in
I}\|g_{i}\|^{2}.$$
%__________________________________________________________________
%********
% Add reference
%********
In \cite{S. Wenchang.06}, for the g-frame $\{ \Lambda_{i}\}_{i \in
J}$, the {\it g-frame operator} $S$ is defined by
$$S : \mathcal{H} \rightarrow \mathcal{H}, Sf = \sum_{i \in J} \Lambda^{\star}_{i}
\Lambda_{i}f, $$ which is a bounded, self-adjoint, positive and
invertible operator and $$A \leq \|S\| \leq B.$$ The {\it canonical
dual g-frame} for $\{ \Lambda_{i}\}_{i \in J}$ is defined by $\{\widetilde{\Lambda_{i}}\}_{i \in J}$, where
$\widetilde{\Lambda_{i}}=\Lambda_{i}S^{-1}$, which is also a g-frame for $\mathcal{H}$ with $\frac{1}{B}$ and
$\frac{1}{A}$ as its lower and upper g-frame bounds, respectively.
Also every $f \in \mathcal{H}$ has an expansion
$$f = \sum_{i \in J} S^{-1} \Lambda^{\star}_{i} \Lambda_{i}f =
\sum_{i \in J} \Lambda^{\star}_{i} \Lambda_{i}S^{-1}f.$$\\

Let $\{ \Lambda_{i}\}_{i \in J}$ be a sequence in
$\mathcal{L}(\mathcal{H},\mathcal{H}_{i})$, $\{e_{i,k}:k \in
K_{i}\}$ be an orthonormal basis for $\mathcal{H}_{i}$, $i \in J$
where $K_{i}$ is a subset of $\mathbb{Z}$ and let
$\psi_{i,k}=\Lambda_{i}^{\star}e_{i,k}$. We have
$\Lambda_{i}f=\sum_{k \in K_{i}}\langle f,\psi_{i,k}\rangle
e_{i,k}$. We call $\{\psi_{i,k}: i \in J, k \in K_{i} \}$ the
sequence induced by $\{\Lambda_{i}\}_{i \in J}$ with respect to
$\{e_{i,k}:k \in K_{i}\}$.\\

%__________________________________________________________
In order to present the main results of this paper, we need the
following Theorem that describe the relationship between frame
(resp. Bessel sequence, tight frame, Riesz basis, orthonormal
basis) and g-frame (resp. g-Bessel sequence, tight g-frame,
g-Riesz basis, g-orthonormal basis), which can be found in
\cite{S. Wenchang.06}.

\begin{thm}\label{thm5}  Let $ \Lambda_{i} \in \mathcal{L}(\mathcal{H},\mathcal{H}_{i})$, and $\psi_{i,k}$ be defined
as above. Then we have the followings:
\begin{itemize}
\item[i)] $\{\Lambda_{i}\}_{i \in J}$ is a g-frame (resp. g-Bessel
sequence, tight g-frame, g-Riesz basis, g-orthonormal basis) for
$\mathcal{H}$ if and only if $\{\psi_{i,k}: i \in J, k \in K_{i}
\}$ is a frame (resp. Bessel sequence, tight frame, Riesz basis,
orthonormal basis ) for $\mathcal{H}$. \item[ii)] The g-frame
operator for  $\{\Lambda_{i}\}_{i \in J}$
 coincides
with the frame operator for $\{\psi_{i,k}: i \in J, k \in K_{i}
\}$. \item[iii)] Moreover, $\{\Lambda_{i}\}_{i \in J}$ and
$\{\widetilde{\Lambda}_{i}\}_{i \in J}$ are a pair of (canonical)
dual g-frames if and only if the induced sequences are a pair of
(canonical) dual frames.
\end{itemize}
\end{thm}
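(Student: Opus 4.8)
The plan is to reduce every assertion to the single pointwise identity
$$\|\Lambda_{i}f\|^{2}=\sum_{k\in K_{i}}|\langle f,\psi_{i,k}\rangle|^{2},\qquad f\in\mathcal{H},\ i\in J,$$
which follows by expanding $\Lambda_{i}f$ in the orthonormal basis $\{e_{i,k}\}$ of $\mathcal{H}_{i}$ and using $\langle\Lambda_{i}f,e_{i,k}\rangle=\langle f,\Lambda_{i}^{\star}e_{i,k}\rangle=\langle f,\psi_{i,k}\rangle$. Summing over $i\in J$ gives $\sum_{i}\|\Lambda_{i}f\|^{2}=\sum_{i}\sum_{k}|\langle f,\psi_{i,k}\rangle|^{2}$, so the two–sided (resp. one–sided, resp. equality) bounds defining a g-frame (resp. g-Bessel sequence, resp. tight g-frame) hold with constants $A,B$ exactly when the corresponding frame inequalities hold for $\{\psi_{i,k}\}$ with the same constants; this handles part (i) in those cases. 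The same expansion proves (ii): applying $\Lambda_{i}^{\star}$ to $\Lambda_{i}f=\sum_{k}\langle f,\psi_{i,k}\rangle e_{i,k}$ yields $\Lambda_{i}^{\star}\Lambda_{i}f=\sum_{k}\langle f,\psi_{i,k}\rangle\psi_{i,k}$, hence $Sf=\sum_{i}\Lambda_{i}^{\star}\Lambda_{i}f=\sum_{i}\sum_{k}\langle f,\psi_{i,k}\rangle\psi_{i,k}$ is literally the frame operator of $\{\psi_{i,k}\}$.

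For the g-orthonormal basis case I would test the defining relation $\langle\Lambda_{i}^{\star}g_{i},\Lambda_{j}^{\star}g_{j}\rangle=\delta_{i,j}\langle g_{i},g_{j}\rangle$ on $g_{i}=e_{i,k}$, $g_{j}=e_{j,\ell}$ to obtain $\langle\psi_{i,k},\psi_{j,\ell}\rangle=\delta_{i,j}\delta_{k,\ell}$, i.e. orthonormality of $\{\psi_{i,k}\}$; conversely bilinearity and continuity in $g_{i},g_{j}$ recover the g-relation from orthonormality. Combining this with the already–translated normalization ($\sum_{i}\|\Lambda_{i}f\|^{2}=\|f\|^{2}$ iff $\{\psi_{i,k}\}$ is a normalized tight frame) and the standard fact that an orthonormal set that is a tight frame with bound $1$ is an orthonormal basis gives the equivalence.

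For the g-Riesz basis case, first note that g-completeness of $\{\Lambda_{i}\}$ is equivalent to completeness of $\{\psi_{i,k}\}$, since $\Lambda_{i}f=0$ iff $\langle f,\psi_{i,k}\rangle=0$ for every $k$, so $\bigcap_{i}\ker\Lambda_{i}=\{0\}$ iff $\{\psi_{i,k}\}^{\perp}=\{0\}$. Next, for a finite $I\subseteq J$ and $g_{i}\in\mathcal{H}_{i}$, expand $g_{i}=\sum_{k}c_{i,k}e_{i,k}$, so $\Lambda_{i}^{\star}g_{i}=\sum_{k}c_{i,k}\psi_{i,k}$ and $\|g_{i}\|^{2}=\sum_{k}|c_{i,k}|^{2}$; then the g-Riesz inequalities over finite $I$ become exactly the Riesz inequalities for $\{\psi_{i,k}\}$ over the index set $\{(i,k):i\in I,\ k\in K_{i}\}$. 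The one genuinely technical point — the step I expect to be the main obstacle — is bridging "finite $I\subseteq J$ with arbitrary $g_{i}\in\mathcal{H}_{i}$" and "finite subsets of $\bigcup_{i}\{i\}\times K_{i}$": when some $K_{i}$ is infinite one must pass to finite truncations of the $c_{i,k}$ and take limits, checking convergence of both the middle term $\|\sum c_{i,k}\psi_{i,k}\|^{2}$ and the outer terms in each direction. Finally, for (iii), the induced sequence of $\widetilde{\Lambda}_{i}=\Lambda_{i}S^{-1}$ is $\widetilde{\Lambda}_{i}^{\star}e_{i,k}=S^{-1}\Lambda_{i}^{\star}e_{i,k}=S^{-1}\psi_{i,k}$ (using $S=S^{\star}$), which by (ii) is the canonical dual frame of $\{\psi_{i,k}\}$; and for a general dual pair the identity $f=\sum_{i}\Theta_{i}^{\star}\Lambda_{i}f$ translates, via the same expansion, into $f=\sum_{i}\sum_{k}\langle f,\psi_{i,k}\rangle\theta_{i,k}$, the defining relation for a dual frame pair.
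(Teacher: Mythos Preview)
The paper does not prove this theorem at all: it is stated as a quoted result from Sun~\cite{S. Wenchang.06} (see the sentence immediately preceding the theorem, ``which can be found in \cite{S. Wenchang.06}''), and no proof is given in the present paper. So there is nothing to compare your argument against here.

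That said, your proof is correct and is essentially the standard one. The reduction to the identity $\|\Lambda_{i}f\|^{2}=\sum_{k}|\langle f,\psi_{i,k}\rangle|^{2}$ is exactly the mechanism behind Sun's original proof, and your treatments of the g-frame/Bessel/tight cases, the frame operator, the g-orthonormal basis case, and part~(iii) are all clean. The one place you flagged as delicate --- passing between ``finite $I\subseteq J$ with arbitrary $g_{i}\in\mathcal{H}_{i}$'' and ``finitely supported scalar sequences over $\bigcup_{i}\{i\}\times K_{i}$'' in the g-Riesz equivalence --- is indeed the only point requiring care, and your proposed limiting argument (truncate each $g_{i}$ to a finite partial sum in the basis $\{e_{i,k}\}$ and use continuity of $\Lambda_{i}^{\star}$ together with the already-established upper Riesz bound to pass to the limit in the middle term) is the right way to close it.
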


We call $\{\psi_{i,k}:   i \in J, k \in K_{i} \}$ the sequence
induced by  $\{\Lambda_{i}\}_{i \in J}$     with respect to
$\{e_{i,k}:k \in K_{i}\}$.
 If $\{e'_{i,k}:   i \in J, k \in K_{i} \}$ is an
orthonormal basis for $\mathcal{H}$ and $\Theta_{i}f=\sum_{k \in
K_{i}}\langle f,e'_{i,k}\rangle e_{i,k}$, then $\{\Theta_{i}\}_{i
\in J}$ is a g-orthonormal basis for $\mathcal{H}$. \\

Given two Bessel sequences, $\Psi = (\psi_{i})$ and $\Phi =
(\phi_{i})$, and the weight sequence $m = (m_{i})$, the {\it Bessel
multiplier} for these sequences is an operator defined by
$$\mathbb{M}_{m,\Psi,\Phi}f=\sum_{i \in J}  m_{i} \langle
f,\psi_{i}\rangle \phi_{i}.$$ We shorten the notation by setting
$\mathbb{M}_{m,\Psi}=\mathbb{M}_{m,\Psi,\Psi}$ (see
\cite{p.Balazs.07}). Bessel multipliers and, in particular, frame
multipliers have useful applications.
For example, in \cite{P.Balazs.2008}, frame multipliers are used to solve approximation problems. \\

\noindent The concept of Bessel multipliers can be generalized to g-Bessel
as follows: For given g-Bessel sequences
$\Lambda=\{\Lambda_{i}\}_{i \in J}$ and $\Theta=\{\Theta_{i}\}_{i
\in J}$, and the weight sequence $m = (m_{i})$, the {\it g-Bessel
multiplier} is defined by
$$\mathbb{M}_{m,\Lambda,\Theta}f=\sum_{i \in J}  m_{i}
\Lambda_{i}^{\star}\Theta_{i}f.$$
 We shorten the
notation by setting
$\mathbb{M}_{m,\Lambda}=\mathbb{M}_{m,\Lambda,\Lambda}$. (see\cite{A.Rahimi.09})\\

\noindent A sequence $(m_{i} : i \in J)$ is called {\it semi-normalized} if there
are bounds $b\geq a>0$, such that $a \leq |m_{i}| \leq b$ for all
$i \in J$.

%*****************************************************************************************************
We define $ GL(\mathcal{H})$ as the set of all bounded linear
operators with a bounded inverse.
A frame controlled by an operator $C \in GL(\mathcal{H})$ is a family of vectors $\Psi=(\psi_{i} \in \mathcal{H} : i \in J)$, such that there exist two constants   $m_{CL} > 0$ and $ M_{CL} < \infty$ satisfying\\
$$ m_{CL} \|f\|^{2} \leq \sum _{i \in J}\langle f, \psi_{i} \rangle
\langle C\psi_{i},f \rangle \leq M_{CL}\|f\|^{2}, f \in
\mathcal{H}.$$ We call
$$S_{C}f=\sum_{i \in J} \langle f,\psi_{i} \rangle C\psi_{i},$$
the {\it controlled frame operator}.(see \cite{p.Balazs.06})\\

\noindent We also generalize this concept to g-frames. A {\it g-frame controlled} by
the operator $C \in GL(\mathcal{H})$ or {\it $C$-controlled g-frame} is
a family of operators $\Lambda=\{\Lambda_{i}\}_{i \in J}$, such
that there exist two
constants   $  m_{CL} > 0$ and $ M_{CL} < \infty$ satisfying\\

$$ m_{CL} \|f\|^{2} \leq \sum _{i \in J}\langle
\Lambda_{i}C^{\star}f,\Lambda_{i}f \rangle \leq M_{CL}\|f\|^{2},$$

%********
The paper is organized as follows. In  Section 2 we show that
every g-frame for an infinite dimensional Hilbert space
$\mathcal{H}$ can be written as a sum of three g-orthonormal bases
for $\mathcal{H}$. We next show that a g-frame can be represented
as a linear combination of two g-orthonormal bases if and only if
it is a g-Riesz basis. We further show that every g-frame can be
written as a sum of two tight g-frames with g-frame bounds one or
a sum of a g-orthonormal basis and a g-Riesz basis for
$\mathcal{H}$. In Section 3 we show each g-Bessel multiplier is a
Bessel multiplier and investigate the inversion of g-frame
multipliers. Also sufficient conditions for invertibility of
multipliers are determined. In Section 4 we introduce controlled
g-frames and show that the sequence induced by each controlled
g-frame is a controlled frame and controlled g-frames are
equivalent to standard g-frames. Finally, in the last section, we
investigate the concept of weighted g-frames, and show that the
sequence induced by each weighted g-frame is a weighted frame.
%********

%___________________________

%-@@@@@@@@@@@@@@@@@@@@@@@@@@@@@@@@@@@@@@@@@@@@@@@@@@@@@@@@@@@@@@@@@@@@@@@------------------------------
\section{Some g-frame Representations}
In \cite{P.G.Casazza.98}, the author has shown, using operator
theory, that every frame in a Hilbert space $\mathcal{H}$ can be
written as the sum of three orthonormal bases. More precisely, if
$(x_i)_{i\in J}$ is a frame for $\mathcal{H}$, then there exist
orthonormal bases $(f_i)$, $(g_i)$ and $(h_i)$ such that
$x_i=a(f_i+g_i+h_i), i\in J$, for some constant $a$. Furthermore,
the author provided an example of a tight frame $(x_i)_{i\in J}$
that cannot be written in the form $x_i=a\, f_i +b\, g_i, i\in J$,
for any orthonormal sequences $(f_i)$, $(g_i)$ and any choice of
constants $a$ and $b$. The author also proved related results, in
particular the following one: a frame in $\mathcal{H}$ can be
written as a linear combination of two orthonormal
 bases if and only if it is a Riesz basis. In this section we generalize some of these results from the frame case to
  the g-frame case.

%----------------------------------------------------------------------------------------
\begin{prop}\label{thm6}
If $\{ \Lambda_{i} \}_{i \in J}$ is a g-frame for a Hilbert space
$\mathcal{H}$, there are g-orthonormal bases $\{ \Upsilon_{i}\}$,
$\{ \Gamma_{i}\}$, $\{ \Psi_{i}\}$ for $\mathcal{H}$ and a
constant $a$ so that
$\Lambda_{i}=a(\Upsilon_{i}+\Gamma_{i}+\Psi_{i})$ for all $i \in
J$.
\end{prop}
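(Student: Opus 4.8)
The plan is to reduce the statement to its frame-theoretic counterpart from \cite{P.G.Casazza.98} through the ``induced sequence'' dictionary of Theorem~\ref{thm5}, and then to repackage the three orthonormal bases produced there into g-orthonormal bases using the construction recalled just before the proposition (the one sending an orthonormal basis $\{e'_{i,k}\}$ of $\mathcal{H}$ to the g-orthonormal basis $\Theta_i f=\sum_{k}\langle f,e'_{i,k}\rangle e_{i,k}$). Nothing new is needed at the level of ideas; the work is bookkeeping of indices and adjoints.

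Concretely, I would first fix for each $i\in J$ an orthonormal basis $\{e_{i,k}:k\in K_i\}$ of $\mathcal{H}_i$ and form the induced sequence $\psi_{i,k}=\Lambda_i^{\star}e_{i,k}$, indexed by $\Omega=\{(i,k):i\in J,\ k\in K_i\}$. By Theorem~\ref{thm5}(i), $\{\psi_{i,k}\}_{(i,k)\in\Omega}$ is a frame for $\mathcal{H}$. Applying the result of \cite{P.G.Casazza.98} to this frame, there exist orthonormal bases $\{f_{i,k}\}_{(i,k)\in\Omega}$, $\{g_{i,k}\}_{(i,k)\in\Omega}$, $\{h_{i,k}\}_{(i,k)\in\Omega}$ of $\mathcal{H}$ and a constant $a$ such that $\psi_{i,k}=a(f_{i,k}+g_{i,k}+h_{i,k})$ for all $(i,k)\in\Omega$. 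The key point is that these three orthonormal bases are indexed by the \emph{same} set $\Omega$, which carries the partition $\Omega=\bigsqcup_{i\in J}(\{i\}\times K_i)$; this is exactly the structure the repackaging step needs.

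Next I would define, for each $i\in J$, operators $\Upsilon_i,\Gamma_i,\Psi_i\in\mathcal{L}(\mathcal{H},\mathcal{H}_i)$ by
$$\Upsilon_i f=\sum_{k\in K_i}\langle f,f_{i,k}\rangle e_{i,k},\qquad \Gamma_i f=\sum_{k\in K_i}\langle f,g_{i,k}\rangle e_{i,k},\qquad \Psi_i f=\sum_{k\in K_i}\langle f,h_{i,k}\rangle e_{i,k}.$$
Each is a well-defined bounded operator (the scalar series are square-summable by Bessel's inequality), and since $\{f_{i,k}:(i,k)\in\Omega\}$ is an orthonormal basis for $\mathcal{H}$, the construction recalled before the proposition shows $\{\Upsilon_i\}_{i\in J}$ is a g-orthonormal basis; likewise for $\{\Gamma_i\}_{i\in J}$ and $\{\Psi_i\}_{i\in J}$. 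A short adjoint computation gives $\Upsilon_i^{\star}e_{i,k}=f_{i,k}$, $\Gamma_i^{\star}e_{i,k}=g_{i,k}$, $\Psi_i^{\star}e_{i,k}=h_{i,k}$, hence
$$a(\Upsilon_i+\Gamma_i+\Psi_i)^{\star}e_{i,k}=a(f_{i,k}+g_{i,k}+h_{i,k})=\psi_{i,k}=\Lambda_i^{\star}e_{i,k}$$
for every $k\in K_i$. Since $\{e_{i,k}:k\in K_i\}$ is total in $\mathcal{H}_i$ and all operators in sight are bounded, $\Lambda_i^{\star}=a(\Upsilon_i+\Gamma_i+\Psi_i)^{\star}$, and therefore $\Lambda_i=a(\Upsilon_i+\Gamma_i+\Psi_i)$ for all $i\in J$, as required.

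The boundedness checks, the identification of the $\Upsilon_i,\Gamma_i,\Psi_i$ as g-orthonormal bases, and the adjoint identities are all routine once Theorem~\ref{thm5} and the remark preceding the proposition are in hand. The only substantive ingredient is the operator-theoretic frame decomposition of \cite{P.G.Casazza.98}, and the main thing to be careful about is exactly its index-set compatibility: one must use the version in which the three orthonormal bases share the given frame's index set $\Omega$ (so that the partition into the blocks $\{i\}\times K_i$ can be transferred back to the g-frame side), and one uses here that $\mathcal{H}$ is infinite dimensional, which guarantees that orthonormal bases of $\mathcal{H}$ of the right (countably infinite) cardinality exist.
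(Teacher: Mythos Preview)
Your proposal is correct and follows essentially the same route as the paper's own proof: pass to the induced frame $\{\psi_{i,k}\}$ via Theorem~\ref{thm5}, apply Casazza's three-orthonormal-bases decomposition to it, and then repackage the resulting orthonormal bases $\{f_{i,k}\},\{g_{i,k}\},\{h_{i,k}\}$ into g-orthonormal bases $\Upsilon_i,\Gamma_i,\Psi_i$ by the formula $\Upsilon_i f=\sum_{k\in K_i}\langle f,f_{i,k}\rangle e_{i,k}$, etc. Your write-up is just more explicit about the adjoint verification and the index-set bookkeeping than the paper's version.
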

\begin{proof} Let $\{\psi_{i,k}\}$ the sequence induced by $\{\Lambda_{i}\}_{i \in J}$
with respect to an orthonormal basis $\{e_{i,k}:k \in K_{i}\}$.\\
Hence $\Lambda_{i}f=\sum_{k \in K_{i}}\langle f,\psi_{i,k}\rangle e_{i,k}$, and so,
by Corollary 2.2 of \cite{P.G.Casazza.98}, there are constant $a$
and orthonormal bases $\{f_{i,k}\},\{g_{i,k}\},\{h_{i,k}\}$ such
that $\psi_{i,k}=a(f_{i,k}+g_{i,k}+h_{i,k})$. Since
$\{f_{i,k}\},\{g_{i,k}\},\{h_{i,k}\}$ are orthonormal bases for
$\mathcal{H}$, by Theorem \ref{thm5}, $\Upsilon_{i}$,
$\Gamma_{i}$, $\Psi_{i}$ are g-orthonormal bases, where
$\Upsilon_{i}f=\sum_{k \in K_{i}}\langle f,f_{i,k}\rangle
e_{i,k}$, $\Gamma_{i}f=\sum_{k \in K_{i}}\langle f,g_{i,k}\rangle
e_{i,k}$, and $\Psi_{i}f=\sum_{k \in K_{i}}\langle
f,h_{i,k}\rangle e_{i,k}$. The proof is complete by noting that
$\Lambda_{i}=a(\Upsilon_{i}+\Gamma_{i}+\Psi_{i})$ for all $i \in
J$.
\end{proof}
%--------------------------------------------------------------------------

%-----------------------------------------------------------------------------------------------------
\begin{prop}\label{thm8}A g-frame
$\{ \Lambda_{i}\}_{i \in J }$  can be written as a linear
combination of two g-orthonormal bases for $\mathcal{H}$ if and
only if $\{ \Lambda_{i}\}_{i \in J}$ is a g-Riesz basis for
$\mathcal{H}$.
\end{prop}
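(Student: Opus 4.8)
The plan is to transfer the statement to the classical frame setting via Theorem~\ref{thm5} and the construction of induced sequences, exactly as in the proof of Proposition~\ref{thm6}. Fix an orthonormal basis $\{e_{i,k}:k\in K_i\}$ for each $\mathcal{H}_i$ and let $\{\psi_{i,k}\}$ be the sequence induced by $\{\Lambda_i\}_{i\in J}$. By Theorem~\ref{thm5}(i), $\{\Lambda_i\}$ is a g-frame (resp. g-Riesz basis, g-orthonormal basis) if and only if $\{\psi_{i,k}\}$ is a frame (resp. Riesz basis, orthonormal basis). The key bookkeeping point, already used implicitly above, is that the operation ``$(x_{i,k})\mapsto$ the family of operators $\Xi_i f=\sum_{k\in K_i}\langle f,x_{i,k}\rangle e_{i,k}$'' is linear in the vectors: if $x_{i,k}=a\,u_{i,k}+b\,v_{i,k}$ then the associated operators satisfy $\Xi_i=a\,U_i+b\,V_i$. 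Hence a g-frame decomposition $\Lambda_i=a\,\Upsilon_i+b\,\Gamma_i$ with $\{\Upsilon_i\},\{\Gamma_i\}$ g-orthonormal bases corresponds precisely, under this dictionary, to a decomposition $\psi_{i,k}=a\,f_{i,k}+b\,g_{i,k}$ with $\{f_{i,k}\},\{g_{i,k}\}$ orthonormal bases of $\mathcal{H}$.

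With this dictionary in hand the proof splits into the two implications. For the ``if'' direction, assume $\{\Lambda_i\}$ is a g-Riesz basis; then $\{\psi_{i,k}\}$ is a Riesz basis, so by the frame-theoretic result of Casazza quoted in the section introduction (the ``linear combination of two orthonormal bases'' characterization, Corollary~2.2 or its companion in \cite{P.G.Casazza.98}) we may write $\psi_{i,k}=a\,f_{i,k}+b\,g_{i,k}$ for orthonormal bases $\{f_{i,k}\},\{g_{i,k}\}$ and scalars $a,b$. Defining $\Upsilon_i f=\sum_k\langle f,f_{i,k}\rangle e_{i,k}$ and $\Gamma_i f=\sum_k\langle f,g_{i,k}\rangle e_{i,k}$, Theorem~\ref{thm5}(i) makes these g-orthonormal bases, and linearity of the induced-operator map gives $\Lambda_i=a\,\Upsilon_i+b\,\Gamma_i$, as required.

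For the ``only if'' direction, suppose $\Lambda_i=a\,\Upsilon_i+b\,\Gamma_i$ for g-orthonormal bases $\{\Upsilon_i\},\{\Gamma_i\}$. Passing to induced sequences (using the \emph{same} orthonormal bases $\{e_{i,k}\}$ of the $\mathcal{H}_i$ throughout, which is legitimate since all three families map into the same spaces $\mathcal{H}_i$), we get $\psi_{i,k}=a\,f_{i,k}+b\,g_{i,k}$ with $\{f_{i,k}\},\{g_{i,k}\}$ orthonormal bases for $\mathcal{H}$; applying the frame-theoretic characterization in the reverse direction shows $\{\psi_{i,k}\}$ is a Riesz basis, whence $\{\Lambda_i\}$ is a g-Riesz basis by Theorem~\ref{thm5}(i). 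The one point that deserves care — and is the main obstacle in making the argument airtight — is verifying that the passage to induced sequences really is compatible with linear combinations in \emph{both} directions: one must check that the induced sequence of $\Lambda_i=a\Upsilon_i+b\Gamma_i$ is exactly $a f_{i,k}+b g_{i,k}$, i.e. that $\Lambda_i^\star e_{i,k}=a\Upsilon_i^\star e_{i,k}+b\Gamma_i^\star e_{i,k}$, which is immediate from $(a\Upsilon_i+b\Gamma_i)^\star=\bar a\,\Upsilon_i^\star+\bar b\,\Gamma_i^\star$ (up to harmless conjugation of the scalars, which can be absorbed), and conversely that every orthonormal basis of $\mathcal{H}$ arising in Casazza's decomposition of $\{\psi_{i,k}\}$ respects the index splitting $(i,k)$ so that it genuinely defines g-orthonormal operators via the recipe in the excerpt. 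Once this compatibility is recorded, the proposition follows formally from Theorem~\ref{thm5} and the cited frame result with no further computation.
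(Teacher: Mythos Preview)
Your proposal is correct and follows essentially the same route as the paper: reduce to the classical frame setting via the induced sequences and Theorem~\ref{thm5}, then invoke Casazza's characterization (Proposition~2.5 of \cite{P.G.Casazza.98}) in both directions. The paper's proof is terser and glosses over the linearity/conjugation compatibility you highlight, but the argument is otherwise identical; your concern about the index splitting $(i,k)$ is not a genuine obstacle, since Casazza's decomposition simply produces orthonormal bases indexed the same way as $\{\psi_{i,k}\}$, which already carries the double index.
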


\begin{proof}
Let $\{\psi_{i,k}\}$ be the sequence induced by $\{\Lambda_{i}\}_{i \in J}$
with respect to an orthonormal basis $\{e_{i,k}:k \in K_{i}\}$.\\
Then $\Lambda_{i}f=\sum_{k \in K_{i}}\langle f,\psi_{i,k}\rangle e_{i,k}$.
Suppose that there are g-orthonormal
bases $\{ \Upsilon_{i}\}$, $\{ \Gamma_{i}\}$ for $\mathcal{H}$ and
constants $a,b$ such that $\Lambda_{i}=a\Upsilon_{i}+b\Gamma_{i}$
for all $i \in J$. So, by Theorem \ref{thm5}, there are
orthonormal bases $\{f_{i,k}\},\{g_{i,k}\}$ for $\mathcal{H}$ such
that $\Upsilon_{i}f=\sum_{k \in K_{i}}\langle f,f_{i,k}\rangle
e_{i,k}$, $\Gamma_{i}f=\sum_{k \in K_{i}}\langle f,g_{i,k}\rangle
e_{i,k}$. Therefore  $\psi_{i,k}=af_{i,k}+bg_{i,k}$, and
Proposition 2.5 of \cite{P.G.Casazza.98} implies that
$\{\psi_{i,k}\}$ is a Riesz basis. So $\{ \Lambda_{i} \}_{i \in
J}$ is a g-Riesz basis for $\mathcal{H}$ by Theorem \ref{thm5}.
Conversely, if $\{\Lambda_{i}\}_{i \in J}$ is a g-Riesz basis, we
have $\Lambda_{i}f=\sum_{k \in K_{i}}\langle f,\psi_{i,k}\rangle
e_{i,k}$, where $\{\psi_{i,k}\}$ is a Riesz basis. So by
Proposition 2.5 of \cite{P.G.Casazza.98}, for some constants
$a,b$, and orthonormal bases $\{f_{i,k}\}$ and $\{g_{i,k}\}, \psi_{i,k}=af_{i,k}+bg_{i,k}$. Hence $\Lambda_{i}=a\Upsilon_{i}+b\Gamma_{i}$,
 where $\Upsilon_{i}$ and $\Gamma_{i}$ are g-orthonormal bases and $\Upsilon_{i}f=\sum_{k \in K_{i}}\langle
f,f_{i,k}\rangle e_{i,k}$, $\Gamma_{i}f=\sum_{k \in K_{i}}\langle
f,g_{i,k}\rangle e_{i,k}$.
\end{proof}

%----------------------------------------------------------------------------------
\begin{prop}\label{thm9} If $K$ is a co-isometry on $\mathcal{H}$, and if
$\{\Theta_{i}\}_{i \in J}$ is a g-orthonormal basis for
$\mathcal{H}$, then $\{\Theta_{i}K^{\star}: i \in J\}$ is a
normalized tight g-frame for $\mathcal{H}$.
\end{prop}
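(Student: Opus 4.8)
The plan is to transfer the problem to the induced-sequence picture, exactly as in Propositions~\ref{thm6} and \ref{thm8}, and then invoke the corresponding statement for ordinary frames together with Theorem~\ref{thm5}. First I would fix an orthonormal basis $\{e_{i,k}: k \in K_i\}$ for each $\mathcal{H}_i$ and let $\{\varphi_{i,k}\}$ be the sequence induced by the g-orthonormal basis $\{\Theta_i\}_{i\in J}$, so that $\Theta_i f = \sum_{k\in K_i}\langle f,\varphi_{i,k}\rangle e_{i,k}$; by Theorem~\ref{thm5}(i), $\{\varphi_{i,k}\}$ is an orthonormal basis for $\mathcal{H}$. The next step is to compute the sequence induced by $\{\Theta_i K^\star\}_{i\in J}$: since $(\Theta_i K^\star)^\star = K\,\Theta_i^\star$, we get $(\Theta_i K^\star)f = \sum_{k\in K_i}\langle f, K\varphi_{i,k}\rangle e_{i,k}$, so the induced sequence is $\{K\varphi_{i,k}\}$. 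Thus, again by Theorem~\ref{thm5}(i), it suffices to show that $\{K\varphi_{i,k}\}$ is a normalized tight frame for $\mathcal{H}$ whenever $\{\varphi_{i,k}\}$ is an orthonormal basis and $K$ is a co-isometry.

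For this last reduction I would argue directly. Recall that $K$ being a co-isometry means $KK^\star = I_{\mathcal{H}}$. Then for any $f\in\mathcal{H}$,
\[
\sum_{i,k}\bigl|\langle f, K\varphi_{i,k}\rangle\bigr|^2
= \sum_{i,k}\bigl|\langle K^\star f, \varphi_{i,k}\rangle\bigr|^2
= \|K^\star f\|^2
= \langle KK^\star f, f\rangle
= \|f\|^2,
\]
where the middle equality is Parseval's identity for the orthonormal basis $\{\varphi_{i,k}\}$. This shows $\{K\varphi_{i,k}\}$ is a normalized tight frame, and pulling back through Theorem~\ref{thm5}(i) gives that $\{\Theta_i K^\star\}_{i\in J}$ is a normalized tight g-frame for $\mathcal{H}$.

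I do not anticipate a serious obstacle here; the content is entirely in correctly identifying the induced sequence of $\{\Theta_i K^\star\}$ and in remembering that ``co-isometry'' is the condition $KK^\star=I$ (as opposed to $K^\star K = I$). The one point that deserves a line of care is the adjoint computation $(\Theta_i K^\star)^\star = K\Theta_i^\star$ and the resulting formula $(\Theta_i K^\star)^\star e_{i,k} = K\Theta_i^\star e_{i,k} = K\varphi_{i,k}$, which is what licenses the application of Theorem~\ref{thm5}. Everything else is the routine Parseval computation displayed above.
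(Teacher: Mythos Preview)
Your argument is correct, but it takes a longer route than the paper. You reduce to induced sequences via Theorem~\ref{thm5}, identify the induced sequence of $\{\Theta_i K^\star\}$ as $\{K\varphi_{i,k}\}$, and then run Parseval plus $KK^\star = I$. The paper instead argues directly at the g-frame level in two lines: since $K$ is a co-isometry, $K^\star$ is an isometry, so for every $f\in\mathcal{H}$,
\[
\sum_{i\in J}\|\Theta_i K^\star f\|^2 = \|K^\star f\|^2 = \|f\|^2,
\]
using only the defining identity $\sum_i\|\Theta_i g\|^2 = \|g\|^2$ for a g-orthonormal basis applied with $g=K^\star f$. Your detour through induced sequences is in the spirit of Propositions~\ref{thm6} and~\ref{thm8}, and it does make the connection to the classical frame statement explicit; but here the g-orthonormal-basis identity already gives the result immediately, so the induced-sequence machinery is not needed.
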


\begin{proof}Since $K$ is a co-isometry, $K^{\star}$ is an isometry. Hence, for all $f \in \mathcal{H}$,
$$\sum_{i\in
J}\|\Theta_{i}K^{\star}f\|^{2}=\|K^{\star}f\|^{2}=\|f\|^{2}.$$
\end{proof}

By the same argument as above, we obtain the following results.
\begin{prop}\label{thm10} Every g-frame is
the sum of two normalized tight g-frames for $\mathcal{H}$.
\end{prop}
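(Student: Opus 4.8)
The plan is to reuse, in spirit, the mechanism of Propositions \ref{thm6} and \ref{thm8}: descend to the induced sequence, apply the matching statement from ordinary frame theory, and then lift the decomposition back to g-frames through Theorem \ref{thm5}. Concretely, I would fix for each $i\in J$ an orthonormal basis $\{e_{i,k}:k\in K_i\}$ of $\mathcal H_i$ and let $\{\psi_{i,k}:i\in J,\ k\in K_i\}$ be the sequence induced by $\{\Lambda_i\}_{i\in J}$, so that $\Lambda_i f=\sum_{k\in K_i}\langle f,\psi_{i,k}\rangle e_{i,k}$. By Theorem \ref{thm5}(i), $\{\psi_{i,k}\}$ is a frame for $\mathcal H$.

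Next I would invoke the frame-theoretic fact from \cite{P.G.Casazza.98} (the normalized-tight companion of the Corollary~2.2 used in Proposition \ref{thm6}) that a frame can be written as a sum of two normalized tight frames, obtaining normalized tight frames $\{u_{i,k}\}$ and $\{v_{i,k}\}$ with $\psi_{i,k}=u_{i,k}+v_{i,k}$ for all $i,k$; since the splitting is done vector by vector, the grouping over $i\in J$ is automatically preserved. I would then define $\Phi_i f=\sum_{k\in K_i}\langle f,u_{i,k}\rangle e_{i,k}$ and $\Xi_i f=\sum_{k\in K_i}\langle f,v_{i,k}\rangle e_{i,k}$. Because $\{u_{i,k}\}$ and $\{v_{i,k}\}$ are normalized tight frames, Theorem \ref{thm5}(i) gives that $\{\Phi_i\}_{i\in J}$ and $\{\Xi_i\}_{i\in J}$ are normalized tight g-frames for $\mathcal H$.

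It then remains only to verify the identity: for every $f\in\mathcal H$,
\[
(\Phi_i+\Xi_i)f=\sum_{k\in K_i}\langle f,u_{i,k}+v_{i,k}\rangle e_{i,k}=\sum_{k\in K_i}\langle f,\psi_{i,k}\rangle e_{i,k}=\Lambda_i f,
\]
so $\Lambda_i=\Phi_i+\Xi_i$ is the desired decomposition into two normalized tight g-frames.

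The lifting step and the closing identity are routine; the weight of the argument sits entirely in the quoted frame fact. That is also where I expect the difficulty. Passing to synthesis operators, the frame operator of a sum of two normalized tight frames has the form $2I+D+D^{\star}$ with $\|D\|\le 1$, so its spectrum lies in $[0,4]$; in particular such a sum has optimal upper bound at most $4$. Hence the classical statement must be quoted under its precise hypotheses (and, correspondingly, the conclusion understood within that range, or the induced frame $\{\psi_{i,k}\}$ first arranged so that those hypotheses apply). Pinning this down cleanly — without spoiling the ``bound one'' conclusion — is the main obstacle, whereas everything downstream is bookkeeping through Theorem \ref{thm5}.
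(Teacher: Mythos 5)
Your route --- pass to the induced sequence $\{\psi_{i,k}\}$, quote the corresponding decomposition result of \cite{P.G.Casazza.98}, and lift back through Theorem \ref{thm5} --- is exactly what the paper intends when it disposes of this proposition ``by the same argument as above'' (i.e.\ the same reduction used for Propositions \ref{thm6} and \ref{thm8}), and your lifting bookkeeping ($\Phi_i^{\star}e_{i,k}=u_{i,k}$, $\Xi_i^{\star}e_{i,k}=v_{i,k}$, hence $\Lambda_i=\Phi_i+\Xi_i$ and tightness transfers by Theorem \ref{thm5}) is correct.

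The obstacle you flag at the end is real and cannot be argued away: the result in \cite{P.G.Casazza.98} says every frame is a \emph{multiple of} a sum of two tight frames with frame bounds one, and the scalar is unavoidable. Your own estimate works verbatim at the g-level: if $\Phi$ and $\Xi$ are normalized tight g-frames, then $\sum_{i\in J}\|(\Phi_i+\Xi_i)f\|^{2}\le 2\sum_{i\in J}\|\Phi_if\|^{2}+2\sum_{i\in J}\|\Xi_if\|^{2}=4\|f\|^{2}$, so any sum of two normalized tight g-frames has upper bound at most $4$; consequently $\Lambda_i=3\Theta_i$, with $\Theta$ a g-orthonormal basis, admits no such decomposition, and the step where you write $\psi_{i,k}=u_{i,k}+v_{i,k}$ with both summands Parseval is not available for a general frame. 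What your argument actually proves --- and all that can be proved --- is the statement with the same constant as in Proposition \ref{thm6}: there exist normalized tight g-frames $\Phi,\Xi$ and a constant $a$ with $\Lambda_i=a(\Phi_i+\Xi_i)$ for all $i\in J$, equivalently $\Lambda$ is the sum of two tight g-frames with equal bounds $a^{2}$. So the gap lies in the proposition as stated (it drops the multiple present in Casazza's formulation), not in your method; to close the write-up, quote Casazza's result with the multiple and state the g-frame conclusion in that form rather than trying to force bound one.
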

%-----------------------------------------------------------------------------
%------------------------------------------------
\begin{prop}\label{thm11} Every g-frame
for a Hilbert space $\mathcal{H}$ is the sum of a g-orthonormal
basis for $\mathcal{H}$ and a g-Riesz basis for $\mathcal{H}$.
\end{prop}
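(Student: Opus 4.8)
The plan is to reduce the statement to its scalar-frame analogue via the induced-sequence machinery of Theorem \ref{thm5}, exactly as in the proofs of Propositions \ref{thm6} and \ref{thm8}. Concretely, I would start by fixing, for each $i\in J$, an orthonormal basis $\{e_{i,k}:k\in K_i\}$ of $\mathcal H_i$ and passing to the sequence $\{\psi_{i,k}\}$ induced by the given g-frame $\{\Lambda_i\}_{i\in J}$, so that $\Lambda_i f=\sum_{k\in K_i}\langle f,\psi_{i,k}\rangle e_{i,k}$. By part (i) of Theorem \ref{thm5}, $\{\psi_{i,k}\}$ is a frame for $\mathcal H$. The frame-theoretic input I would then invoke is the corresponding decomposition result from \cite{P.G.Casazza.98} (the companion of Corollary 2.2 and Proposition 2.5 used above), namely that every frame $\{\psi_{i,k}\}$ can be written as $\psi_{i,k}=f_{i,k}+r_{i,k}$, where $\{f_{i,k}\}$ is an orthonormal basis and $\{r_{i,k}\}$ is a Riesz basis for $\mathcal H$.

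Granting that, the rest is bookkeeping. Define $\Upsilon_i f=\sum_{k\in K_i}\langle f,f_{i,k}\rangle e_{i,k}$ and $R_i f=\sum_{k\in K_i}\langle f,r_{i,k}\rangle e_{i,k}$. Since $\{f_{i,k}\}$ is an orthonormal basis, Theorem \ref{thm5}(i) gives that $\{\Upsilon_i\}_{i\in J}$ is a g-orthonormal basis for $\mathcal H$; since $\{r_{i,k}\}$ is a Riesz basis, the same theorem gives that $\{R_i\}_{i\in J}$ is a g-Riesz basis for $\mathcal H$. Finally, for every $f\in\mathcal H$ and every $i$,
$$(\Upsilon_i+R_i)f=\sum_{k\in K_i}\langle f,f_{i,k}+r_{i,k}\rangle e_{i,k}=\sum_{k\in K_i}\langle f,\psi_{i,k}\rangle e_{i,k}=\Lambda_i f,$$
so $\Lambda_i=\Upsilon_i+R_i$ for all $i\in J$, which is the claimed decomposition.

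The only genuine content — and hence the step I would flag as the potential obstacle — is the scalar statement that an arbitrary frame splits as an orthonormal basis plus a Riesz basis. If \cite{P.G.Casazza.98} supplies this verbatim, the proof is immediate as above; if it only supplies the "three orthonormal bases" and "two orthonormal bases iff Riesz" results, I would have to produce the splitting by hand. That is not hard operator-theoretically: writing the analysis operator of $\{\psi_{i,k}\}$ as $T\colon\mathcal H\to\ell^2$, one chooses an isometry $V\colon\mathcal H\to\ell^2$ whose range is a "coordinate" subspace spanned by part of the standard basis and with $\|T-V\|$ small (possible by dilating and perturbing, since $T^*T=S$ is bounded below), so that $T-V$ is itself bounded below, hence the synthesis images of the rows of $V$ form an orthonormal basis while those of $T-V$ form a Riesz basis; their sum recovers $\{\psi_{i,k}\}$. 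Everything else is the routine transfer through Theorem \ref{thm5} already used repeatedly in this section, so no new difficulty arises on the g-frame side.
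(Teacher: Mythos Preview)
Your main argument is correct and is exactly the approach the paper intends: it states Proposition~\ref{thm11} (together with Proposition~\ref{thm10}) under the blanket remark ``By the same argument as above,'' meaning the induced-sequence reduction via Theorem~\ref{thm5} followed by the corresponding scalar result from \cite{P.G.Casazza.98}, just as you wrote. The needed scalar decomposition (every frame is an orthonormal basis plus a Riesz basis) is indeed a theorem in \cite{P.G.Casazza.98}, so your fallback operator-theoretic sketch is unnecessary; that sketch is also the weakest part of your write-up, since an arbitrary analysis operator $T$ need not be norm-close to any isometry $V$, and Casazza's actual argument proceeds differently (via an operator decomposition $T^{*}=aU+L$ with $U$ unitary and $L$ invertible).
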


%+@@@@@@@@@@@@@@@@@@@@@@@@@@@@@@@@@@@@@@@@@@@@@@@@@@@@@@@@@@@@@@@@@@@@@@@@@@@@@@@@@@@@@@@@@@@@@@@@@@
\section{Invertibility of Multipliers}
%********
In this section we show  each g-Bessel multiplier is a Bessel
multiplier and investigate the inversion of g-frame multipliers.
Also sufficient conditions for invertibility of multipliers are
determined.
%********
Equivalent results as proved in \cite{p.Balazs.07} for Bessel
multiplier can be shown for g-Bessel multiplier. We prove some of
them, the proof of the others follows in the same manner.

The following proposition gives the connection between the
g-Bessel sequences and Bessel sequences.

\begin{prop}\label{thm14} Each g-Bessel multiplier is a Bessel
multiplier. \\
Furthermore, if $m \in \ell^{\infty}$ and
$\Lambda=\{\Lambda_{i}\}_{i \in J}$, $\Theta=\{\Theta_{i} \}_{i
\in J}$  are g-Bessel sequences for $\mathcal{H}$ with bounds
$B_{\Lambda}, B_{\Theta}$, respectively, then the multiplier
$M_{m, \Lambda, \Theta}$ is well defined on $\mathcal{H}$ and
$\|M_{m, \Lambda, \Theta}\| \leq \sqrt{B_{\Lambda} B_{\Theta}}
\|m\|_\infty$.
\end{prop}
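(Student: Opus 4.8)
The plan is to reduce the statement about g-Bessel multipliers to the corresponding statement about ordinary Bessel multipliers, via the induced-sequence machinery from Theorem~\ref{thm5}. First I would fix orthonormal bases $\{e_{i,k}: k \in K_i\}$ for each $\mathcal{H}_i$ and form the sequences $\{\psi_{i,k}\}$ and $\{\varphi_{i,k}\}$ induced by $\{\Lambda_i\}_{i\in J}$ and $\{\Theta_i\}_{i\in J}$ respectively, so that $\Lambda_i f = \sum_{k\in K_i}\langle f,\psi_{i,k}\rangle e_{i,k}$ and $\Theta_i f = \sum_{k\in K_i}\langle f,\varphi_{i,k}\rangle e_{i,k}$, with $\psi_{i,k}=\Lambda_i^\star e_{i,k}$ and $\varphi_{i,k}=\Theta_i^\star e_{i,k}$. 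By Theorem~\ref{thm5}(i), $\{\psi_{i,k}\}$ and $\{\varphi_{i,k}\}$ are Bessel sequences for $\mathcal{H}$, and one checks they have the same bounds $B_\Lambda$, $B_\Theta$ (indeed $\sum_{i,k}|\langle f,\psi_{i,k}\rangle|^2 = \sum_i \|\Lambda_i f\|^2$ since the $e_{i,k}$ are orthonormal).

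The key computation is to rewrite $\Lambda_i^\star \Theta_i f$ in terms of the induced vectors. Since $\Theta_i f = \sum_{k\in K_i}\langle f,\varphi_{i,k}\rangle e_{i,k}$, applying $\Lambda_i^\star$ and using $\Lambda_i^\star e_{i,k} = \psi_{i,k}$ gives $\Lambda_i^\star\Theta_i f = \sum_{k\in K_i}\langle f,\varphi_{i,k}\rangle \psi_{i,k}$. Therefore
$$
\mathbb{M}_{m,\Lambda,\Theta}f = \sum_{i\in J} m_i \Lambda_i^\star\Theta_i f = \sum_{i\in J}\sum_{k\in K_i} m_i \langle f,\varphi_{i,k}\rangle \psi_{i,k},
$$
which is exactly the Bessel multiplier $\mathbb{M}_{\widetilde m,\widetilde\Phi,\widetilde\Psi}$ for the sequences $\widetilde\Psi=\{\psi_{i,k}\}$, $\widetilde\Phi=\{\varphi_{i,k}\}$ with the weight $\widetilde m=(m_i)_{i\in J, k\in K_i}$ obtained by repeating each $m_i$ along $k\in K_i$. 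Note $\|\widetilde m\|_\infty = \|m\|_\infty$. This establishes the first assertion: a g-Bessel multiplier \emph{is} a Bessel multiplier.

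For the norm bound and well-definedness, I would invoke the corresponding result for Bessel multipliers (the analogue proved in \cite{p.Balazs.07}): if $m\in\ell^\infty$ and $\widetilde\Psi$, $\widetilde\Phi$ are Bessel sequences with bounds $B_\Lambda$, $B_\Theta$, then $\mathbb{M}_{\widetilde m,\widetilde\Phi,\widetilde\Psi}$ is well defined on all of $\mathcal{H}$ with $\|\mathbb{M}_{\widetilde m,\widetilde\Phi,\widetilde\Psi}\| \le \|m\|_\infty \sqrt{B_\Lambda B_\Theta}$; alternatively one proves this directly by the standard Cauchy--Schwarz estimate, splitting $\sum m_i\langle f,\varphi_{i,k}\rangle\psi_{i,k}$ as the composition of the analysis operator of $\widetilde\Phi$, multiplication by $\widetilde m$, and the synthesis operator of $\widetilde\Psi$, and bounding $\|\sum_{i,k}c_{i,k}\psi_{i,k}\|\le \sqrt{B_\Lambda}\,\|(c_{i,k})\|_{\ell^2}$. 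Combining with $\sum_{i,k}|\langle f,\varphi_{i,k}\rangle|^2 \le B_\Theta\|f\|^2$ yields the claimed estimate. The main obstacle is essentially bookkeeping: making sure that the double-indexed weight $\widetilde m$ genuinely lies in $\ell^\infty$ with the same sup-norm, and that the unconditional convergence of the g-multiplier series transfers correctly to the doubly-indexed Bessel-multiplier series; once the identity $\Lambda_i^\star\Theta_i f = \sum_k \langle f,\varphi_{i,k}\rangle\psi_{i,k}$ is in hand, everything else is routine.
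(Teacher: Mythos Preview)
Your proof is correct and follows essentially the same approach as the paper: you reduce to the ordinary Bessel multiplier via the induced sequences $\psi_{i,k}=\Lambda_i^\star e_{i,k}$, $\varphi_{i,k}=\Theta_i^\star e_{i,k}$, obtain the identity $\mathbb{M}_{m,\Lambda,\Theta}=\mathbb{M}_{\widetilde m,\widetilde\Phi,\widetilde\Psi}$ with the constant-in-$k$ weight $\widetilde m_{i,k}=m_i$, and then invoke the known norm bound for Bessel multipliers from \cite{p.Balazs.07}. The paper's proof is the same, just somewhat terser about why the induced sequences inherit the bounds $B_\Lambda$, $B_\Theta$.
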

\begin{proof} Let $\Lambda=\{\Lambda_{i}\}_{i \in J}$ and
$\Theta=\{\Theta_{i} \}_{i \in J}$ be g-Bessel sequences with induced sequences $\{\psi_{i,k}\}$ and $\{\phi_{i,k}\}$, respectively. Then
$$\mathbb{M}_{m,\Lambda,\Theta}f=\sum_{i \in J}  m_{i}
\Lambda_{i}^{\star}\Theta_{i}f=\sum_{i \in J}\sum_{k \in
K_{i}}m_{i}\langle f,\phi_{i,k}\rangle
\psi_{i,k}=\mathbb{M}_{m',\Phi,\Psi},$$ where $\Psi=\{\psi_{i,k}:
i\in J,k \in K_{i}\}, \Phi=\{\phi_{i,k}: i\in J,k \in K_{i}\}$ and
$m'=\{m'_{i,k}=m_{i}: i\in J,k \in K_{i}\}$. \\ For the proof of
the second part, since the bounds of $\{\psi_{i,k}\}$ and
$\{\varphi_{i,k}\}$ are $B_\Lambda$ and $B_\Theta$, respectively,
the assertion follows by the first part and Theorem 6.1 of
\cite{p.Balazs.07}.
\end{proof}
\begin{thm}\label{thm18}
Let $M_{m, \Lambda, \Theta}$ be well defined and invertible on
$\mathcal{H}$.
\begin{itemize}
\item[i)] If $\Theta=\{\Theta_{i} \}_{i \in J}$ (resp.
 $\Lambda=\{\Lambda_{i}\}_{i \in J}$ is a
g-Bessel sequence for $\mathcal{H}$ with bound $B_{\Theta}$, then
 $m\Lambda=\{m_{i}\Lambda_{i} \}_{i \in J}$ (resp.
$m\Theta$) satisfies the lower g-frame condition for $\mathcal{H}$
with bound $\frac{1}{B_{\Theta} \|M^{-1}_{m, \Lambda,
\Theta}\|^{2}}$. \item[ii)] If  $\Theta$ (resp. $\Lambda$) and
$m\Lambda$ (resp. $m\Theta$) are g-Bessel sequences for
$\mathcal{H}$, then they are g-frames for $\mathcal{H}$.
\item[iii)] If $\Theta$ (resp. $\Lambda$) is a g-Bessel sequence
for $\mathcal{H}$ and $m \in \ell^{\infty}$, then $\Lambda$ (resp.
$\Theta$) satisfies the lower g-frame condition for $\mathcal{H}$.
\item[iv)] If $\Theta$ and $\Lambda$ are g-Bessel sequences for
$\mathcal{H}$ and $m \in \ell^{\infty}$, then $\Theta$ and
$\Lambda$ are g-frames for $\mathcal{H}$; $m\Lambda$ and $m\Theta$
are also g-frames for $\mathcal{H}$.
\end{itemize}
\end{thm}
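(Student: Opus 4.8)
The plan is to reduce everything to the corresponding statement for ordinary Bessel multipliers via the induced-sequence machinery, exactly as in Proposition~\ref{thm14}. The key observation is that if $\Lambda=\{\Lambda_i\}_{i\in J}$ and $\Theta=\{\Theta_i\}_{i\in J}$ have induced sequences $\Psi=\{\psi_{i,k}\}$ and $\Phi=\{\phi_{i,k}\}$ with respect to fixed orthonormal bases $\{e_{i,k}:k\in K_i\}$ of the $\mathcal{H}_i$, then by the computation in the proof of Proposition~\ref{thm14} we have $\mathbb{M}_{m,\Lambda,\Theta}=\mathbb{M}_{m',\Phi,\Psi}$, where $m'=\{m'_{i,k}=m_i\}$ is the obvious ``inflated'' weight. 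Moreover $m\Lambda=\{m_i\Lambda_i\}$ has induced sequence $\{m_i\psi_{i,k}\}=m'\Psi$, and similarly for $m\Theta$. Since Theorem~\ref{thm5}(i) tells us that a g-Bessel sequence (resp.\ a sequence satisfying the lower g-frame bound, resp.\ a g-frame) corresponds precisely to the analogous property of its induced sequence --- the lower-bound case because $\sum_{i}\|(m_i\Lambda_i)f\|^2=\sum_{i}\sum_{k}|m_i\langle f,\psi_{i,k}\rangle|^2$ --- each of the four assertions becomes an instance of the Bessel-multiplier inversion theorem of Balazs (the relevant statement in \cite{p.Balazs.07}), applied to $\mathbb{M}_{m',\Phi,\Psi}$.

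Concretely, for (i) I would note that $M_{m,\Lambda,\Theta}=M_{m',\Phi,\Psi}$ is invertible with $\|M^{-1}_{m,\Lambda,\Theta}\|=\|M^{-1}_{m',\Phi,\Psi}\|$, and that $\Phi$ Bessel with bound $B_\Theta$ forces, for every $f$,
\[
\|f\|=\|M^{-1}_{m',\Phi,\Psi}M_{m',\Phi,\Psi}f\|\le\|M^{-1}_{m',\Phi,\Psi}\|\,\Big\|\sum_{i,k}m_i\langle f,\phi_{i,k}\rangle\psi_{i,k}\Big\|,
\]
and then estimating the norm on the right by duality against the Bessel bound of $\Phi$ gives $\sum_{i,k}|m_i\langle f,\psi_{i,k}\rangle|^2\ge \|f\|^2/(B_\Theta\|M^{-1}_{m,\Lambda,\Theta}\|^2)$; translating back through Theorem~\ref{thm5} this is exactly the lower g-frame bound for $m\Lambda$ with the stated constant. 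Part (ii) then follows because a sequence that is simultaneously g-Bessel and satisfies the lower g-frame condition is a g-frame, and once $m\Lambda$ (resp.\ $m\Theta$) is a g-frame, re-running the adjoint form $M^{\star}_{m,\Lambda,\Theta}=M_{\bar m,\Theta,\Lambda}$ of the same argument promotes $\Theta$ (resp.\ $\Lambda$) too. Part (iii) is the observation that when $m\in\ell^\infty$ the lower bound for $\Lambda$ is obtained from that for $m\Lambda$ by dividing through by $\|m\|_\infty^2$ (or one applies the Balazs result directly with the roles of the weight redistributed), and part (iv) is just the conjunction of (ii) and (iii): boundedness of $m$ makes $m\Lambda,m\Theta$ g-Bessel whenever $\Lambda,\Theta$ are, so (ii) applies to all four sequences at once.

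The only real point requiring care --- and the step I would expect to be the main obstacle --- is checking that the passage to induced sequences is faithful for the \emph{lower} g-frame inequality and that it interacts correctly with multiplication by the weight $m$; in particular one must verify that the inflated weight $m'$ is the induced weight of $m\Lambda$ and that $m'\in\ell^\infty$ iff $m\in\ell^\infty$ with the same sup-norm, so that the quantitative constant $1/(B_\Theta\|M^{-1}_{m,\Lambda,\Theta}\|^2)$ transfers verbatim from \cite{p.Balazs.07}. Everything else is bookkeeping, and, as the authors note, the argument is parallel to the Bessel-multiplier case, so I would present (i) in full detail and indicate (ii)--(iv) as consequences obtained ``by the same manner,'' invoking Theorem~\ref{thm5} and Theorem~6.1 (and the inversion theorem) of \cite{p.Balazs.07} at each step.
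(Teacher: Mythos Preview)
Your approach is essentially the same as the paper's: reduce to the induced sequences via Proposition~\ref{thm14} and Theorem~\ref{thm5}, identify $\mathbb{M}_{m,\Lambda,\Theta}=\mathbb{M}_{m',\Phi,\Psi}$ with the inflated weight $m'_{i,k}=m_i$, verify that $\sum_i\|m_i\Lambda_i f\|^2=\sum_{i,k}|\langle f,m_i\psi_{i,k}\rangle|^2$, and then invoke the corresponding Bessel-multiplier result. The paper cites Proposition~4.3 of \cite{p.Balazs.09} (Balazs--Stoeva) for the lower-frame conclusion in (i), not \cite{p.Balazs.07} as you do; your inline duality estimate in fact reproduces that proposition's proof, so the mathematics is fine, but you should correct the reference. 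Otherwise your treatment of (ii)--(iv) as consequences of (i) matches the paper's, which simply records ``(ii) and (iii) follow from (i)'' and for (iv) observes that $m\in\ell^\infty$ makes $m\Lambda,m\Theta$ g-Bessel.
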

\begin{proof} (i) Since $\Theta=\{\Theta_{i} \}_{i \in J}$ is a g-Bessel sequence, by Theorem \ref{thm5}
and \ref{thm14}, $\Theta_{i}f=\sum_{k \in K_{i}}\langle
f,\phi_{i,k}\rangle e_{i,k}$, where $\{\phi_{i,k}\}$ is a Bessel
sequence for $\mathcal{H}$ with bound $B_{\Theta}$, and
$\mathbb{M}_{m,\Lambda,\Theta}=\mathbb{M}_{m',\Phi,\Psi}$, where
$\Lambda_{i}f=\sum_{k \in K_{i}}\langle f,\psi_{i,k}\rangle
e_{i,k}$, $\Psi=\{\psi_{i,k}: i\in J,k \in K_{i}\},
\Phi=\{\phi_{i,k}: i\in J,k \in K_{i}\}$ and $m'=\{m'_{i,k}=m_{i}:
i\in J,k \in K_{i}\}$. Also we have$$\sum_{i \in
J}\|m_{i}\Lambda_{i}f\|^{2}=\sum_{i \in J}\sum_{k \in
K_{i}}|\langle f,m_{i}\psi_{i,k}\rangle|^{2}.$$
 By Proposition 4.3 of
\cite{p.Balazs.09}, $m'\Psi$ satisfies the lower frame condition
for $\mathcal{H}$ with bound $\frac{1}{B_{\Theta}
\|M^{-1}_{m',\Phi,\Psi}\|^{2}}=\frac{1}{B_{\Theta} \|M^{-1}_{m,
\Lambda, \Theta}\|^{2}}$.

(ii) and (iii) follow from (i).

(iv) Let $\Theta$ and $\Lambda$ be g-Bessel sequences for
$\mathcal{H}$ and $m \in
 \ell^{\infty}$. Then $m\Lambda$ and $m\Theta$ are also g-Bessel for
 $\mathcal{H}$.
\end{proof}
In the following proposition we give a sufficient condition for
invertibility  of g-multipliers.
\begin{prop}\label{thm19}
Let $\Lambda=\{ \Lambda_{i} \}_{i \in J}$ be a g-frame for
$\mathcal{H}$, $G : \mathcal{H} \rightarrow \mathcal{H}$ be a
bounded bijective operator and $\Theta_i = \Lambda_iG$, $\forall i
\in J$. Let $m$ be positive (resp. negative) semi-normalized. Then
$\Theta$ is a g-frame for $\mathcal{H}$ and the g-frame multiplier
$M_{m, \Lambda, \Theta}$ is invertible on $\mathcal{H}$ with
$$M^{-1}_{m, \Lambda, \Theta} = \left\{ \begin{array}{ll}
G^{-1}S^{-1}_{(\sqrt{m_i} \Lambda_i)}, & \mbox{when}~~ m_i > 0,
\forall i,\\
 -G^{-1}S^{-1}_{(\sqrt{|m_i|} \Lambda_i)}, & \mbox{when}~~ m_i < 0,
\forall i.
\end{array} \right.  \eqno(1)$$
 \end{prop}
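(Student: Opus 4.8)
The plan is to reduce everything to the case $m_i>0$ (the case $m_i<0$ is identical after replacing $m$ by $-m$ and $M_{m,\Lambda,\Theta}$ by $-M_{-m,\Lambda,\Theta}$), and then to rewrite the multiplier $M_{m,\Lambda,\Theta}$ as a composition of operators whose invertibility is already known. First I would observe that, since $G$ is a bounded bijection and $\{\Lambda_i\}_{i\in J}$ is a g-frame with bounds $A_\Lambda,B_\Lambda$, the sequence $\Theta_i=\Lambda_iG$ satisfies $\sum_i\|\Theta_i f\|^2=\sum_i\|\Lambda_i Gf\|^2$, which is squeezed between $A_\Lambda\|Gf\|^2$ and $B_\Lambda\|Gf\|^2$; using $\|G^{-1}\|^{-1}\|f\|\le\|Gf\|\le\|G\|\|f\|$ this gives g-frame bounds for $\Theta$. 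So the first assertion is immediate.

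For the main point, I would compute, for $f\in\mathcal H$,
\[
M_{m,\Lambda,\Theta}f=\sum_{i\in J}m_i\Lambda_i^\star\Theta_i f=\sum_{i\in J}m_i\Lambda_i^\star\Lambda_i Gf=\sum_{i\in J}(\sqrt{m_i}\Lambda_i)^\star(\sqrt{m_i}\Lambda_i)Gf=S_{(\sqrt{m_i}\Lambda_i)}\,Gf,
\]
where $S_{(\sqrt{m_i}\Lambda_i)}$ is the g-frame operator of the sequence $\{\sqrt{m_i}\Lambda_i\}_{i\in J}$. Here I need to know that $\{\sqrt{m_i}\Lambda_i\}_{i\in J}$ is genuinely a g-frame so that its frame operator is bounded, positive, self-adjoint and invertible: since $m$ is positive semi-normalized with $0<a\le m_i\le b$, one has $aA_\Lambda\|f\|^2\le\sum_i\|\sqrt{m_i}\Lambda_i f\|^2\le bB_\Lambda\|f\|^2$, so it is a g-frame and $S_{(\sqrt{m_i}\Lambda_i)}$ is invertible on $\mathcal H$. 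Hence $M_{m,\Lambda,\Theta}=S_{(\sqrt{m_i}\Lambda_i)}G$ is a composition of two invertible operators, therefore invertible, and
\[
M_{m,\Lambda,\Theta}^{-1}=G^{-1}S_{(\sqrt{m_i}\Lambda_i)}^{-1},
\]
which is exactly the first branch of $(1)$; the second branch follows by applying this to $-m$ and noting $M_{m,\Lambda,\Theta}=-M_{-m,\Lambda,\Theta}$ when all $m_i<0$, so that $M_{m,\Lambda,\Theta}^{-1}=-M_{-m,\Lambda,\Theta}^{-1}=-G^{-1}S_{(\sqrt{|m_i|}\Lambda_i)}^{-1}$.

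I do not expect a serious obstacle here; the whole argument is essentially the algebraic identity $\sum_i m_i\Lambda_i^\star\Lambda_i G=S_{(\sqrt{m_i}\Lambda_i)}G$ together with the standard facts (recalled in the excerpt from \cite{S. Wenchang.06}) that a g-frame operator is invertible. The only place that needs a little care is checking that $\{\sqrt{m_i}\Lambda_i\}_{i\in J}$ is a g-frame — i.e. that the semi-normalization of $m$ transfers the g-frame bounds — and making sure the rearrangement of the (unconditionally convergent) sum defining the multiplier is legitimate, which again follows from $m\in\ell^\infty$ and $\Lambda,\Theta$ being g-Bessel, as in Proposition \ref{thm14}. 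So the proof is short: establish that $\Theta$ is a g-frame, establish that $\{\sqrt{|m_i|}\Lambda_i\}$ is a g-frame, factor $M_{m,\Lambda,\Theta}$ through $G$ and this latter frame operator, and invert.
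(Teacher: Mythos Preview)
Your proof is correct. It is, however, genuinely different from the paper's argument. The paper proceeds by passing to the induced sequences: it writes $\Lambda_i f=\sum_{k}\langle f,\psi_{i,k}\rangle e_{i,k}$ with $\{\psi_{i,k}\}$ a frame, observes that $\Theta_i f=\sum_k\langle f,G^\star\psi_{i,k}\rangle e_{i,k}$ so that the induced sequence of $\Theta$ is $\phi_{i,k}=G^\star\psi_{i,k}$ (a frame because $G^\star$ is a bounded bijection), invokes Proposition~\ref{thm14} to identify $M_{m,\Lambda,\Theta}$ with the ordinary Bessel multiplier $M_{m',\Phi,\Psi}$, and then cites Lemma~4.4 of \cite{p.Balazs.06} for the frame case to conclude invertibility and formula~(1), using Theorem~\ref{thm5} to match $S_{(\sqrt{m_i}\Lambda_i)}$ with $S_{(\sqrt{m'_{i,k}}\psi_{i,k})}$.

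Your route stays entirely at the g-frame level: you factor $M_{m,\Lambda,\Theta}=S_{(\sqrt{m_i}\Lambda_i)}G$ directly from the definition and invert. This is more self-contained (no appeal to the external lemma in \cite{p.Balazs.06}) and arguably more transparent, since the identity $\sum_i m_i\Lambda_i^\star\Lambda_i G=S_{(\sqrt{m_i}\Lambda_i)}G$ is exactly what drives the result. The paper's approach, on the other hand, is consistent with its overall strategy of transferring g-frame statements to known frame statements via induced sequences. Either argument is fine; yours is shorter.
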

 \begin{proof}  We
have $\Lambda_{i}f=\sum_{k \in K_{i}}\langle f,\psi_{i,k}\rangle
e_{i,k}$, where $\{\psi_{i,k}\}$  is a frame for $\mathcal{H}$.
Since $\Theta_i = \Lambda_iG,$
$$ \Theta_{i}f=\sum_{k \in K_{i}}\langle Gf,\psi_{i,k}\rangle
e_{i,k}=\sum_{k \in K_{i}}\langle f,G^{\star}\psi_{i,k}\rangle
e_{i,k}.$$
%********
We know $G^{\star}$ is a bounded bijective operator, and hence
$\phi_{i,k}=G^{\star}\psi_{i,k}$ is a frame for $\mathcal{H}$
(see\cite{O.Christensen.03}). So $\Theta$ is a g-frame for
$\mathcal{H}$. By Proposition \ref{thm14}
$\mathbb{M}_{m,\Lambda,\Theta}=\mathbb{M}_{m',\Phi,\Psi}$ where
$\Psi=\{\psi_{i,k}: i\in J,k \in K_{i}\}, \Phi=\{\phi_{i,k}: i\in
J,k \in K_{i}\}$ and $m'=\{m'_{i,k}=m_{i}: i\in J,k \in K_{i}\}$.
If $m$ is positive then, by Theorem \ref{thm5}, the g-frame
operator for $\{\sqrt{m_{i}}\Lambda_{i} \}_{i \in J}$ coincides
with the frame operator for $\{\sqrt{m'_{i,k}}\psi_{i,k}: i \in J,
k \in K_{i} \}$. Therefore by Lemma 4.4 of \cite{p.Balazs.06},
$\mathbb{M}_{m,\Lambda,\Theta}=\mathbb{M}_{m',\Phi,\Psi}$ is
invertible and equation (1) holds.
\end{proof}
By the same method that we use to prove Proposition \ref{thm19},
one can prove the followings.
\begin{prop}\label{thm20}
Let $\Lambda=\{ \Lambda_{i} \}_{i \in J}$ be a g-frame for
$\mathcal{H}$ and let $\Lambda^{d} =\{ \Lambda^{d}_{i} \}_{i \in
J}$ be a dual g-frame of $\Lambda$. Let $0 \leq \lambda <
\frac{1}{\sqrt{B_\Lambda B_{\Lambda^d}}} (\leq 1)$ and let $(m_i)$
be such that $1 - \lambda \leq m_i \leq 1 + \lambda$, for all $i
\in J$. Then $M_{m, \Lambda,\Lambda^d}$ and $M_{m, \Lambda^d,
\Lambda}$ are invertible on $\mathcal{H}$,
$$\frac{1}{1 + \lambda \sqrt{B_\Lambda B_{\Lambda^d}}} \|h\| \leq
\|M^{-1}_{m, \Lambda, \Lambda^d} h\| \leq \frac{1}{1 - \lambda
\sqrt{B_\Lambda B_{\Lambda^d}}} \|h\|, \forall h \in \mathcal{H},
\eqno(2)$$ and the same inequalities hold for $\|M^{-1}_{m,
\Lambda^d, \Lambda} h\|$. Moreover,
$$M^{-1}_{m, \Lambda, \Lambda^d} = \sum^{\infty}_{k=0} (M_{m',
\Lambda, \Lambda^d})^{k} \; \mbox{and} \; M^{-1}_{m, \Lambda^d,
\Lambda} = \sum^{\infty}_{k=0} (M_{m', \Lambda^d,
\Lambda})^k,\eqno(3)$$ where $m'=(1-m_i)$.
\end{prop}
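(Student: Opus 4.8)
\textbf{Proof proposal for Proposition \ref{thm20}.} The plan is to reduce everything to the frame-multiplier case via the induced sequences and Theorem \ref{thm5}, exactly as in Proposition \ref{thm19}, and then invoke the corresponding quantitative result for frame multipliers from \cite{p.Balazs.06, p.Balazs.07}. First I would fix an orthonormal basis $\{e_{i,k}: k \in K_i\}$ for each $\mathcal{H}_i$ and write $\Lambda_i f = \sum_{k \in K_i} \langle f, \psi_{i,k}\rangle e_{i,k}$ and $\Lambda_i^d f = \sum_{k \in K_i} \langle f, \phi_{i,k}\rangle e_{i,k}$ for the induced sequences $\Psi = \{\psi_{i,k}\}$, $\Phi = \{\phi_{i,k}\}$. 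By Theorem \ref{thm5}(i) these are frames for $\mathcal{H}$ with the same bounds $B_\Lambda$, $B_{\Lambda^d}$, and by Theorem \ref{thm5}(iii) they are dual frames, since $\Lambda^d$ is a dual g-frame of $\Lambda$. By Proposition \ref{thm14} we have $\mathbb{M}_{m, \Lambda, \Lambda^d} = \mathbb{M}_{m', \Phi, \Psi}$ where $m'$ is the constant-on-blocks weight $m'_{i,k} = m_i$; note $m'$ inherits the bound $1 - \lambda \leq m'_{i,k} \leq 1 + \lambda$ and $\lambda < (B_\Lambda B_{\Lambda^d})^{-1/2}$ translates verbatim, since the frame bounds are unchanged.

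Next I would apply the known invertibility theorem for multipliers of pairs of dual frames with weights close to $1$ — this is the frame-theoretic statement behind \cite{p.Balazs.06} (a Neumann-series perturbation argument: write $M_{m',\Phi,\Psi} = \mathrm{Id} - M_{(1-m_i'),\Phi,\Psi}$, observe that $\|M_{(1-m_i'),\Phi,\Psi}\| \leq \lambda\sqrt{B_\Lambda B_{\Lambda^d}} < 1$ by Proposition \ref{thm14}, and sum the geometric series). This immediately yields that $\mathbb{M}_{m',\Phi,\Psi}$, hence $M_{m,\Lambda,\Lambda^d}$, is invertible on $\mathcal{H}$, gives the norm bounds (2) from $\|(\mathrm{Id} - T)^{-1}\| \leq (1-\|T\|)^{-1}$ and $\|(\mathrm{Id}-T)^{-1}h\| \geq (1+\|T\|)^{-1}\|h\|$, and gives the Neumann expansion (3) with $m' = (1 - m_i)$. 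The argument for $M_{m,\Lambda^d,\Lambda}$ is identical with the roles of $\Psi$ and $\Phi$ interchanged, and since $\{\psi_{i,k}\}$ and $\{\phi_{i,k}\}$ are a symmetric pair of dual frames, the same constants appear.

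The main obstacle — really the only place requiring care — is the bookkeeping that the passage $\Lambda \rightsquigarrow \Psi$ preserves every quantity entering the hypotheses and conclusions: that the induced weight $m'$ has exactly the same two-sided bounds as $m$, that $\sqrt{B_\Lambda B_{\Lambda^d}}$ is genuinely the operator norm bound for $M_{(1-m_i),\Lambda,\Lambda^d}$ coming from Theorem 6.1 of \cite{p.Balazs.07} applied to $(1-m_i')$ and the pair $\Phi,\Psi$, and that the operator identities $M^{-1}_{m,\Lambda,\Lambda^d} = M^{-1}_{m',\Phi,\Psi}$ and $(M_{m',\Lambda,\Lambda^d})^k = (M_{m',\Phi,\Psi})^k$ hold as operators on $\mathcal{H}$ (which they do, since Proposition \ref{thm14} identifies the multipliers themselves, not merely their norms). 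Once this dictionary is in place the result transfers mechanically, so I would present the proof as: rewrite via induced sequences, quote the frame-multiplier theorem, and observe that each of (2) and (3) is the image of the frame statement under this identification.
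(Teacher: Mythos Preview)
Your proposal is correct and matches the paper's approach exactly: the paper does not give a separate proof of this proposition but simply states that it follows ``by the same method that we use to prove Proposition~\ref{thm19}'', i.e., pass to the induced sequences via Theorem~\ref{thm5} and Proposition~\ref{thm14}, and then invoke the corresponding frame-multiplier result (a Neumann-series perturbation argument) from \cite{p.Balazs.09, p.Balazs.06}. Your write-up is in fact more detailed than what the paper provides, though you should be careful not to reuse the symbol $m'$ for both the block-constant weight $m'_{i,k}=m_i$ and the sequence $(1-m_i)$ appearing in the conclusion.
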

\begin{cor}\label{thm21}
Let $\Lambda$ be a g-frame for $\mathcal{H}$ and $\tilde{\Lambda}$
be the canonical dual of $\Lambda$. Let $0 \leq \lambda  <
\sqrt{\frac{A_{\Lambda}}{B_\Lambda}} (\leq 1)$ and $(m_i)_{i \in
J}$ be such that $1 - \lambda \leq m_i \leq 1 + \lambda$, for all
$i \in J$. Then $M_{m, \Lambda, \tilde{\Lambda}}$ and $M_{m,
\tilde{\Lambda}, \Lambda}$ are invertible on $\mathcal{H}$,
$$\frac{1}{1 + \lambda \sqrt{B_\Lambda /A_\Lambda}} \|h\| \leq
\|M^{-1}_{m, \Lambda, \tilde{\Lambda}} h\| \leq \frac{1}{1 -
\lambda \sqrt{B_\Lambda /A_\Lambda}} \|h\|, \forall h \in
\mathcal{H},$$
 and the same inequalities hold for $\|M^{-1}_{m, \tilde{\Lambda}, \Lambda} h\|$.
Moreover,
$$M^{-1}_{m, \Lambda, \tilde{\Lambda}} = \sum^{\infty}_{k=0} (M_{m',
\Lambda, \tilde{\Lambda}})^k \; ~~\mbox{and}\; ~~M^{-1}_{m,
\tilde{\Lambda}, \Lambda} = \sum^{\infty}_{k=0} (M_{m',
\tilde{\Lambda}, \Lambda})^k,$$ where $m'=(1-m_i)$.
\end{cor}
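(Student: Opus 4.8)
The plan is to deduce Corollary~\ref{thm21} directly from Proposition~\ref{thm20} by specializing the dual g-frame $\Lambda^{d}$ to be the canonical dual $\tilde{\Lambda}$ and then identifying the bound $B_{\Lambda^{d}}$. Recall from the introduction that the canonical dual g-frame $\tilde{\Lambda}=\{\Lambda_{i}S^{-1}\}_{i\in J}$ has lower and upper g-frame bounds $\tfrac{1}{B_{\Lambda}}$ and $\tfrac{1}{A_{\Lambda}}$ respectively; in particular the \emph{upper} bound of $\tilde{\Lambda}$ is $B_{\tilde{\Lambda}}=\tfrac{1}{A_{\Lambda}}$. Substituting $\Lambda^{d}=\tilde{\Lambda}$ and $B_{\Lambda^{d}}=\tfrac{1}{A_{\Lambda}}$ into Proposition~\ref{thm20}, the quantity $\sqrt{B_{\Lambda}B_{\Lambda^{d}}}$ becomes $\sqrt{B_{\Lambda}/A_{\Lambda}}$, and every occurrence of $\sqrt{B_{\Lambda}B_{\Lambda^{d}}}$ in the statement of Proposition~\ref{thm20} is replaced accordingly.

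First I would check the hypothesis: the condition $0\le\lambda<\tfrac{1}{\sqrt{B_{\Lambda}B_{\Lambda^{d}}}}$ of Proposition~\ref{thm20} becomes exactly $0\le\lambda<\sqrt{A_{\Lambda}/B_{\Lambda}}$, which matches the hypothesis of the corollary (and since $A_{\Lambda}\le B_{\Lambda}$ this bound is indeed $\le 1$, consistent with the parenthetical remark). The assumption $1-\lambda\le m_{i}\le 1+\lambda$ is carried over verbatim. Then I would simply invoke Proposition~\ref{thm20} with this choice of dual: it yields that $M_{m,\Lambda,\tilde{\Lambda}}$ and $M_{m,\tilde{\Lambda},\Lambda}$ are invertible on $\mathcal{H}$, that the norm estimate~(2) holds with $\sqrt{B_{\Lambda}B_{\Lambda^{d}}}$ rewritten as $\sqrt{B_{\Lambda}/A_{\Lambda}}$, giving precisely the displayed two-sided inequality, and that the Neumann-series representation~(3) holds with the same substitution, giving the claimed formulas for $M^{-1}_{m,\Lambda,\tilde{\Lambda}}$ and $M^{-1}_{m,\tilde{\Lambda},\Lambda}$ with $m'=(1-m_{i})$.

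There is essentially no obstacle here: the corollary is a pure specialization, and the only point requiring a moment's care is the bookkeeping of which bound of $\tilde{\Lambda}$ plays the role of $B_{\Lambda^{d}}$ in Proposition~\ref{thm20} — namely the upper bound $\tfrac{1}{A_{\Lambda}}$, not the lower bound $\tfrac{1}{B_{\Lambda}}$. One should also note that $\tilde{\Lambda}$ is itself a genuine g-frame for $\mathcal{H}$ (stated in the introduction), so it is a legitimate choice of $\Lambda^{d}$ in Proposition~\ref{thm20}. With those remarks in place the proof is a one-line reduction, and I would write it as: "Apply Proposition~\ref{thm20} with $\Lambda^{d}=\tilde{\Lambda}$, noting that $B_{\tilde{\Lambda}}=1/A_{\Lambda}$, so that $\sqrt{B_{\Lambda}B_{\tilde{\Lambda}}}=\sqrt{B_{\Lambda}/A_{\Lambda}}$; the claims then follow immediately from the corresponding assertions of Proposition~\ref{thm20}."
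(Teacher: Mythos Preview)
Your proposal is correct and is exactly the approach the paper intends: Corollary~\ref{thm21} is stated without a separate proof precisely because it is the specialization of Proposition~\ref{thm20} to $\Lambda^{d}=\tilde{\Lambda}$, using $B_{\tilde{\Lambda}}=1/A_{\Lambda}$ so that $\sqrt{B_{\Lambda}B_{\Lambda^{d}}}=\sqrt{B_{\Lambda}/A_{\Lambda}}$. Your bookkeeping of the bounds and the hypothesis check are accurate, and nothing further is needed.
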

\begin{prop}\label{thm22}
Let $\Lambda$ be a g-frame for $\mathcal{H}$. Assume that $\Theta
- \Lambda$ is a g-Bessel sequence for $\mathcal{H}$ with bound
$B_{\Theta - \Lambda} < \frac{A^2_\Lambda}{B_\Lambda}$. For every
positive (or negative) semi-normalized sequence $m$, satisfying
$$0 < a \leq |m_i| \leq b, \forall i, \; \mbox{and} \; ~~\frac{b}{a}
< \frac{A_\Lambda}{\sqrt{B_{\Theta- \Lambda} B_\Lambda}} ~,$$
 it follows that $\Theta$ is a g-frame for $\mathcal{H}$, the multipliers
 $M_{m, \Lambda, \Theta}$ and $M_{m, \Theta, \Lambda}$ are invertible on
 $\mathcal{H}$,
 $$\frac{1}{bB_\Lambda + b\sqrt{B_\Lambda B_{\Theta - \Lambda}}} \|h\| \leq
 \|M^{-1}_{m, \Lambda, \Theta} h\| \leq \frac{1}{aA_\Lambda -
 b\sqrt{B_\Lambda B_{\Theta-\Lambda}}} \|h\|,$$
 and the same inequalities hold for $\|M^{-1}_{m, \Theta, \Lambda}
 h\|$. Moreover,
 $$M^{-1}_{m, \Lambda, \Theta} = \left\{ \begin{array}{ll}
 \sum^{\infty}_{k=0} [S^{-1}_{(\sqrt{m_i} \Lambda_i)} (S_{(\sqrt{m_i}
 \Lambda_i)} - M_{m, \Lambda, \Theta})]^k S^{-1}_{(\sqrt{m_i} \Lambda_i)}, &
 \mbox{if} ~~\; m_i > 0, \forall i, \\
 -\sum^{\infty}_{k=0} [S^{-1}_{(\sqrt{|m_i|} \Lambda_i)} (S_{(\sqrt{|m_i|}
 \Lambda_i)} + M_{m, \Lambda, \Theta})]^k S^{-1}_{(\sqrt{|m_i|} \Lambda_i)}, &
 \mbox{if}~~ \; m_i < 0, \forall i,
\end{array} \right.$$
$$M^{-1}_{m, \Theta, \Lambda} = \left\{ \begin{array}{ll}
 \sum^{\infty}_{k=0} [S^{-1}_{(\sqrt{m_i} \Lambda_i)} (S_{(\sqrt{m_i}
 \Lambda_i)} - M_{m, \Theta, \Lambda})]^k S^{-1}_{(\sqrt{m_i} \Lambda_i)}, &
 \mbox{if}~~ \; m_i > 0, \forall i, \\
 -\sum^{\infty}_{k=0} [S^{-1}_{(\sqrt{|m_i|} \Lambda_i)} (S_{(\sqrt{|m_i|}
 \Lambda_i)} + M_{m, \Theta, \Lambda})]^k S^{-1}_{(\sqrt{|m_i|} \Lambda_i)}, &
 \mbox{if}~~ \; m_i < 0, \forall i.
\end{array} \right.$$
\end{prop}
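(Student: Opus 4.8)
The plan is to reduce everything, as in the earlier propositions of Section~3, to the corresponding statement for ordinary frames and multipliers proved in \cite{p.Balazs.06} (see also \cite{p.Balazs.07, p.Balazs.09}), using the dictionary provided by Theorem~\ref{thm5}. First I would fix an orthonormal basis $\{e_{i,k}:k\in K_i\}$ of each $\mathcal H_i$ and pass to the induced sequences: write $\Lambda_i f=\sum_{k\in K_i}\langle f,\psi_{i,k}\rangle e_{i,k}$ and $\Theta_i f=\sum_{k\in K_i}\langle f,\phi_{i,k}\rangle e_{i,k}$, so that $\Psi=\{\psi_{i,k}\}$ is a frame for $\mathcal H$ with bounds $A_\Lambda,B_\Lambda$ (Theorem~\ref{thm5}(i)). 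Since $\Theta-\Lambda$ is induced by $\{\phi_{i,k}-\psi_{i,k}\}$ with the same orthonormal bases, Theorem~\ref{thm5}(i) applied to the g-Bessel sequence $\Theta-\Lambda$ shows $\{\phi_{i,k}-\psi_{i,k}\}$ is a Bessel sequence with bound $B_{\Theta-\Lambda}$. By Proposition~\ref{thm14} the g-Bessel multiplier equals the ordinary Bessel multiplier with the ``flattened'' weight $m'_{i,k}=m_i$, i.e. $\mathbb M_{m,\Lambda,\Theta}=\mathbb M_{m',\Phi,\Psi}$ and $\mathbb M_{m,\Theta,\Lambda}=\mathbb M_{m',\Psi,\Phi}$, and $m'$ satisfies exactly the same two-sided bound $0<a\le|m'_{i,k}|\le b$ with the same ratio $b/a$.

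Next I would observe that all the hypotheses transfer verbatim: $B_{\Theta-\Lambda}<A_\Lambda^2/B_\Lambda$ and $b/a<A_\Lambda/\sqrt{B_{\Theta-\Lambda}B_\Lambda}$ are now precisely the hypotheses of the corresponding perturbation result for frame multipliers in \cite{p.Balazs.06} applied to the frame $\Psi$, the Bessel perturbation $\Phi-\Psi$, and the semi-normalized scalar sequence $m'$. That result then yields simultaneously: (a) $\Phi$ is a frame for $\mathcal H$ — whence, by Theorem~\ref{thm5}(i) again, $\Theta$ is a g-frame for $\mathcal H$; (b) $\mathbb M_{m',\Phi,\Psi}$ and $\mathbb M_{m',\Psi,\Phi}$ are invertible on $\mathcal H$ with the stated norm bounds on the inverse (these are stated purely in terms of $a,b,A_\Lambda,B_\Lambda,B_{\Theta-\Lambda}$, which are unchanged under the reduction); (c) the Neumann-type series for the inverses.

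It remains only to translate the series formulas back into g-language. Here I would use Theorem~\ref{thm5}(ii): for the auxiliary sequence $\{\sqrt{m_i}\,\Lambda_i\}_{i\in J}$ (positive case) or $\{\sqrt{|m_i|}\,\Lambda_i\}_{i\in J}$ (negative case), the g-frame operator coincides with the frame operator of the induced sequence $\{\sqrt{m'_{i,k}}\,\psi_{i,k}\}$, so $S_{(\sqrt{m_i}\Lambda_i)}$ in the g-setting is literally the operator $S_{(\sqrt{m'_{i,k}}\psi_{i,k})}$ appearing in the frame formula of \cite{p.Balazs.06}; likewise each occurrence of $M_{m,\Lambda,\Theta}$ or $M_{m,\Theta,\Lambda}$ is the same operator as $\mathbb M_{m',\Phi,\Psi}$ or $\mathbb M_{m',\Psi,\Phi}$. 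Substituting these identifications into the frame-level Neumann series gives exactly the two displayed formulas for $M^{-1}_{m,\Lambda,\Theta}$ and $M^{-1}_{m,\Theta,\Lambda}$, including the sign in the $m_i<0$ case, which comes from factoring out $\mathrm{sgn}(m_i)=-1$ and writing the positive operator $S_{(\sqrt{|m_i|}\Lambda_i)}$.

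The routine parts are bookkeeping, so the only real point requiring care is making sure the quantitative hypotheses and the constants in the conclusion are genuinely invariant under the flattening $m\mapsto m'$ and under passing to induced sequences; since the relevant quantities are $\|m\|_\infty=\|m'\|_\infty$, $\inf|m_i|=\inf|m'_{i,k}|$, the frame bounds (preserved by Theorem~\ref{thm5}(i)), the Bessel bound of the perturbation (same), and the frame operators (preserved by Theorem~\ref{thm5}(ii)), this invariance holds. Hence the main obstacle is essentially notational rather than mathematical: one must be scrupulous that ``$S_{(\sqrt{m_i}\Lambda_i)}$'' on the g-side and ``$S_{(\sqrt{m_i'}\psi_{i,k})}$'' on the frame side denote the very same bounded operator on $\mathcal H$ before quoting \cite{p.Balazs.06}, after which the proof is complete.
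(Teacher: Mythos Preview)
Your approach is correct and matches the paper's own, which does not give a separate proof but simply notes that Propositions~\ref{thm20}--\ref{thm24} follow ``by the same method that we use to prove Proposition~\ref{thm19}'' --- i.e., pass to induced sequences via Theorem~\ref{thm5} and Proposition~\ref{thm14}, then quote the corresponding frame-level result. The only quibble is bibliographic: the frame-multiplier perturbation theorem you invoke lives in \cite{p.Balazs.09} (Balazs--Stoeva), not \cite{p.Balazs.06}.
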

 \begin{prop}\label{thm23}
Let $\Lambda$ be a g-frame for $\mathcal{H}$. Assume that \\
$\exists \mu \in [0, \frac{A^2_\Lambda}{B_\Lambda})$ such that
$\sum \| (m_i \Theta_i - \Lambda_i)f\|^2 \leq
\mu\|f\|^2$, ~~$\forall f \in \mathcal{H}$. \\
Then $m \Theta$ is a g-frame for $\mathcal{H}$, the multipliers
$M_{m, \Lambda, \Theta}$ and  $M_{m, \Theta, \Lambda}$ are
invertible on $\mathcal{H}$,
$$\frac{1}{B_\Lambda + \sqrt{\mu B_\Lambda}} \|h\| \leq \|M^{-1}_{m,
\Lambda, \Theta} h\| \leq \frac{1}{A_\Lambda - \sqrt{\mu
B_\Lambda}} \|h\|, \forall h \in \mathcal{H},$$
 and the same inequalities hold for $\|M^{-1}_{m, \Theta, \Lambda}
 h\|$. Moreover,
 $$M^{-1}_{m, \Lambda, \Theta} = \sum^{\infty}_{k=0} [S^{-1}_\Lambda
 (S_\Lambda - M_{m, \Lambda, \Theta})]^k S^{-1}_\Lambda,$$ and $$ M^{-1}_{m, \Theta, \Lambda} = \sum^{\infty}_{k=0}
[S^{-1}_\Lambda (S_\Lambda
 - M_{m, \Theta, \Lambda})]^k S^{-1}_\Lambda.$$
 As a consequence, if $m$ is semi-normalized, then $\Theta$ is also
 a g-frame for $\mathcal{H}$.
 \end{prop}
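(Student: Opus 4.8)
The plan is to run the same reduction used in Propositions \ref{thm19}--\ref{thm22}: pass to the induced frames and then invoke the corresponding perturbation/invertibility result for frame multipliers from \cite{p.Balazs.09}. Fix, for each $i\in J$, an orthonormal basis $\{e_{i,k}:k\in K_i\}$ of $\mathcal{H}_i$, and let $\{\psi_{i,k}\}$ and $\{\phi_{i,k}\}$ be the sequences induced by $\{\Lambda_i\}$ and $\{\Theta_i\}$, so $\Lambda_i f=\sum_k\langle f,\psi_{i,k}\rangle e_{i,k}$ and $\Theta_i f=\sum_k\langle f,\phi_{i,k}\rangle e_{i,k}$. By Theorem \ref{thm5}(i), $\Psi=\{\psi_{i,k}\}$ is a frame for $\mathcal{H}$ with bounds $A_\Lambda,B_\Lambda$, and by Theorem \ref{thm5}(ii) its frame operator coincides with $S_\Lambda$. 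Setting $m'_{i,k}=m_i$, Proposition \ref{thm14} gives the multiplier identities $M_{m,\Lambda,\Theta}=M_{m',\Phi,\Psi}$ and $M_{m,\Theta,\Lambda}=M_{m',\Psi,\Phi}$, where $\Phi=\{\phi_{i,k}\}$.

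Next I would translate the hypothesis. The operators $\{m_i\Theta_i-\Lambda_i\}_{i\in J}$ lie in $\mathcal{L}(\mathcal{H},\mathcal{H}_i)$, and the sequence induced by them with respect to $\{e_{i,k}\}$ is exactly $m'\Phi-\Psi=\{m'_{i,k}\phi_{i,k}-\psi_{i,k}:i\in J,k\in K_i\}$; since by assumption $\sum_i\|(m_i\Theta_i-\Lambda_i)f\|^2\le\mu\|f\|^2$ for all $f$, Theorem \ref{thm5}(i) shows that $m'\Phi-\Psi$ is a Bessel sequence for $\mathcal{H}$ with bound $\mu<A_\Lambda^2/B_\Lambda$. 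In particular $m'\Phi$ is itself Bessel, being a sum of two Bessel sequences, so $M_{m,\Lambda,\Theta}$ and $M_{m,\Theta,\Lambda}$ are well defined by Proposition \ref{thm14}.

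Now I would apply the perturbation-and-invertibility theorem for frame multipliers of \cite{p.Balazs.09}: for a frame $\Psi$ with bounds $A,B$ and a sequence $\Phi'$ with $\sum_{i,k}|\langle f,\psi_{i,k}-\phi'_{i,k}\rangle|^2\le\mu\|f\|^2$ and $\mu<A^2/B$, the sequence $\Phi'$ is a frame, the multipliers $M_{(1),\Phi',\Psi}$ and $M_{(1),\Psi,\Phi'}$ are invertible, the displayed norm estimates on their inverses hold with the constants $A,B$, and $M^{-1}_{(1),\Phi',\Psi}=\sum_{k=0}^{\infty}[S_\Psi^{-1}(S_\Psi-M_{(1),\Phi',\Psi})]^kS_\Psi^{-1}$, and symmetrically for $M^{-1}_{(1),\Psi,\Phi'}$. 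Applying this with $\Phi'=m'\Phi$, $A=A_\Lambda$, $B=B_\Lambda$, and $S_\Psi=S_\Lambda$ (Theorem \ref{thm5}(ii)) yields in one stroke: $m'\Phi$ is a frame for $\mathcal{H}$; $M_{m,\Lambda,\Theta}=M_{m',\Phi,\Psi}$ and $M_{m,\Theta,\Lambda}=M_{m',\Psi,\Phi}$ are invertible on $\mathcal{H}$; the stated inequalities for $\|M^{-1}_{m,\Lambda,\Theta}h\|$ and $\|M^{-1}_{m,\Theta,\Lambda}h\|$ hold with $A_\Lambda,B_\Lambda$; and the two Neumann-series identities hold with $S_\Lambda$ in place of $S_\Psi$. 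Finally $m'\Phi$ is precisely the sequence induced by $\{m_i\Theta_i\}_{i\in J}$, so Theorem \ref{thm5}(i) converts ``$m'\Phi$ is a frame'' into ``$m\Theta$ is a g-frame for $\mathcal{H}$''.

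For the concluding assertion, if $m$ is semi-normalized with $0<a\le|m_i|\le b$ for all $i$, then $a^2\|\Theta_i f\|^2\le\|m_i\Theta_i f\|^2\le b^2\|\Theta_i f\|^2$ for every $i$ and $f$; summing over $i$ and using the g-frame bounds of $m\Theta$ just obtained shows $\Theta$ satisfies both g-frame inequalities, hence is a g-frame. There is no genuine obstacle here beyond careful bookkeeping: the step most in need of attention is to line up the threshold $\mu<A_\Lambda^2/B_\Lambda$ and the exact form of the Neumann series with the precise statement quoted from \cite{p.Balazs.09}, together with the identification (via Theorem \ref{thm5}(ii)) of the frame operator of $\Psi$ with $S_\Lambda$; once that is checked, the whole proposition is a transcription of the frame case.
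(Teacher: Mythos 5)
Your proposal is correct and follows exactly the route the paper intends: Proposition \ref{thm23} is stated there with the remark that it is proved ``by the same method'' as Proposition \ref{thm19}, namely passing to the induced sequences via Theorem \ref{thm5} and Proposition \ref{thm14} and invoking the corresponding invertibility-under-perturbation result for frame multipliers in \cite{p.Balazs.09}, which is precisely your argument. Your extra bookkeeping (translating the hypothesis into the Bessel bound $\mu$ for $m'\Phi-\Psi$, identifying $S_\Psi$ with $S_\Lambda$, and deducing the semi-normalized consequence) fills in the details the paper leaves implicit, with only the usual conjugate-weight subtlety for complex $m$ left at the same level of informality as the paper itself.
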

\begin{prop}\label{thm24}
Let $\Lambda$ be a g-frame for $\mathcal{H}$. Assume that there
exists

$\mu \in [0, \frac{1}{B_\Lambda})$ such that $\sum\|(m_i \Theta_i
- \Lambda^d_i)f\|^2 \leq \mu
\|f\|^2$, for all $f \in \mathcal{H}$, \\
for some dual g-frame $\Lambda^d = (\Lambda^d_i)$ of $\Lambda$.
Then $m\Theta$ is a g-frame for $\mathcal{H}$, the bounded
multipliers $M_{m,, \Lambda, \Theta}$ and $M_{m, \Theta, \Lambda}$
are invertible on $\mathcal{H}$,
$$\frac{1}{1+\sqrt{\mu B_\Lambda}} \|h\| \leq \|M^{-1}_{m, \Lambda, \Theta}
h\| \leq \frac{1}{1-\sqrt{\mu B_\Lambda}} \|h\|, \forall h \in
\mathcal{H},$$ and the same inequalities hold for $\|M^{-1}_{m,
\Theta, \Lambda} h\|$. Moreover,
$$M^{-1}_{m, \Lambda, \Theta} = \sum^{\infty}_{k=0} (I_{\mathcal{H}} -
M_{m, \Lambda, \Theta})^k \;~~ \mbox{and} \;~~~ M^{-1}_{m, \Theta,
\Lambda} = \sum^{\infty}_{k=0} (I_{\mathcal{H}} - M_{m, \Theta,
\Lambda})^k.$$ As a consequence, if $m$ is semi-normalized, then
$\Theta$ is also a g-frame for $\mathcal{H}$ .
\end{prop}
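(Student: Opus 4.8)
The plan is to follow the same \emph{transference} strategy used throughout this section: pass from the g-frame $\Lambda$ and the operators $\Theta_i$ to their induced sequences in $\mathcal{H}$, apply the corresponding perturbation result for ordinary frame multipliers (the analogue for Proposition \ref{thm24} being the frame-multiplier perturbation theorem of \cite{p.Balazs.09}, in the spirit of Proposition \ref{thm22}), and then transfer the conclusion back via Theorem \ref{thm5} and Proposition \ref{thm14}. Concretely, fix an orthonormal basis $\{e_{i,k}: k \in K_i\}$ of each $\mathcal{H}_i$ and write $\Lambda_i f = \sum_{k \in K_i} \langle f, \psi_{i,k}\rangle e_{i,k}$, so that $\{\psi_{i,k}\}$ is a frame for $\mathcal{H}$ with bounds $A_\Lambda, B_\Lambda$ by Theorem \ref{thm5}; similarly write $\Theta_i f = \sum_{k\in K_i}\langle f,\varphi_{i,k}\rangle e_{i,k}$ and $\Lambda^d_i f = \sum_{k \in K_i}\langle f,\psi^d_{i,k}\rangle e_{i,k}$, where by Theorem \ref{thm5}(iii) the sequence $\{\psi^d_{i,k}\}$ is a dual frame of $\{\psi_{i,k}\}$. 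Set $m' = \{m'_{i,k} = m_i\}$ as in Proposition \ref{thm14}.

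The first key step is to translate the hypothesis $\sum_{i}\|(m_i\Theta_i - \Lambda^d_i)f\|^2 \le \mu\|f\|^2$ into the statement that $\{m_i\varphi_{i,k} - \psi^d_{i,k}\}$ is a Bessel sequence in $\mathcal{H}$ with bound $\mu$. This is immediate from the Parseval identity in each $\mathcal{H}_i$: $\|(m_i\Theta_i-\Lambda^d_i)f\|^2 = \sum_{k\in K_i}|\langle f, m_i\varphi_{i,k}-\psi^d_{i,k}\rangle|^2$, and summing over $i$ gives exactly $\sum_{i,k}|\langle f, m'_{i,k}\varphi_{i,k}-\psi^d_{i,k}\rangle|^2 \le \mu\|f\|^2$. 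Since $\mu < 1/B_\Lambda$, the frame-theoretic hypothesis of the corresponding scalar result of \cite{p.Balazs.09} is met for the triple $(\{\psi_{i,k}\}, \{\varphi_{i,k}\})$ with the dual $\{\psi^d_{i,k}\}$ and multiplier $m'$. That result then yields: $m'\Phi = \{m_i\varphi_{i,k}\}$ is a frame, the multipliers $\mathbb{M}_{m',\Psi,\Phi}$ and $\mathbb{M}_{m',\Phi,\Psi}$ are invertible with the claimed norm bounds governed by $1/(1\pm\sqrt{\mu B_\Lambda})$, and the Neumann-series expansions $\mathbb{M}^{-1}_{m',\Psi,\Phi} = \sum_{k=0}^\infty (I_{\mathcal{H}} - \mathbb{M}_{m',\Psi,\Phi})^k$ hold (the operator $I_{\mathcal{H}} - \mathbb{M}_{m',\Psi,\Phi}$ having norm $\le \sqrt{\mu B_\Lambda} < 1$).

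The second step is the transfer back. By Proposition \ref{thm14}, $\mathbb{M}_{m,\Lambda,\Theta} = \mathbb{M}_{m',\Phi,\Psi}$ and $\mathbb{M}_{m,\Theta,\Lambda} = \mathbb{M}_{m',\Psi,\Phi}$ (matching the order of the induced sequences carefully — one should double-check whether $\Lambda_i^\star\Theta_i$ induces $\mathbb{M}_{m',\Phi,\Psi}$ or $\mathbb{M}_{m',\Psi,\Phi}$, since the synthesis side of $\Lambda_i^\star$ carries $\psi_{i,k}$ and the analysis side of $\Theta_i$ carries $\varphi_{i,k}$, exactly as in the proof of Proposition \ref{thm14}). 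Since $m'\Phi$ is the induced sequence of $m\Theta = \{m_i\Theta_i\}$, Theorem \ref{thm5}(i) gives that $m\Theta$ is a g-frame for $\mathcal{H}$. The invertibility of $\mathbb{M}_{m,\Lambda,\Theta}$ and $\mathbb{M}_{m,\Theta,\Lambda}$ and the two-sided estimate $\tfrac{1}{1+\sqrt{\mu B_\Lambda}}\|h\| \le \|\mathbb{M}^{-1}_{m,\Lambda,\Theta}h\| \le \tfrac{1}{1-\sqrt{\mu B_\Lambda}}\|h\|$ are then just restatements of the scalar conclusions, and likewise for the Neumann series, using $I_{\mathcal{H}}$ in both the frame and the g-frame settings (note the g-frame operator coincides with the induced frame operator by Theorem \ref{thm5}(ii), which is what makes the two pictures literally identical here). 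Finally, if $m$ is semi-normalized with $0 < a \le |m_i| \le b$, then $\Theta_i = m_i^{-1}(m_i\Theta_i)$ and $\sum_i\|\Theta_i f\|^2 = \sum_i |m_i|^{-2}\|m_i\Theta_i f\|^2$ is sandwiched between $b^{-2}$ and $a^{-2}$ times $\sum_i\|m_i\Theta_i f\|^2$, so $\Theta$ inherits the g-frame property from $m\Theta$.

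The main obstacle is bookkeeping rather than anything deep: one must be scrupulous about which induced sequence plays the role of the ``analysis'' frame and which the ``synthesis'' frame in each multiplier, so that the order-sensitive statements $\mathbb{M}_{m,\Lambda,\Theta}=\mathbb{M}_{m',\Phi,\Psi}$ versus $\mathbb{M}_{m,\Theta,\Lambda}=\mathbb{M}_{m',\Psi,\Phi}$ are assigned correctly and the asymmetric hypotheses of the scalar perturbation lemma (which distinguishes the Bessel-bound role of $\{\psi_{i,k}\}$) are invoked on the right sequence. A secondary point requiring a line of justification is that $m'$ lies in $\ell^\infty$ and the multipliers are well defined before invertibility is discussed — this is covered by Proposition \ref{thm14} together with the fact that a semi-normalized or $\ell^\infty$ weight keeps the Bessel bounds finite. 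Once these are pinned down, the proof is a direct application of the scalar result of \cite{p.Balazs.09}, exactly parallel to the proofs of Propositions \ref{thm19}--\ref{thm23}.
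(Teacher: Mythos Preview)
Your proposal is correct and follows essentially the same approach as the paper: the paper does not give a separate proof of Proposition \ref{thm24} but states after Proposition \ref{thm19} that the subsequent propositions (including this one) are proved ``by the same method'', namely by passing to induced sequences via Theorem \ref{thm5} and Proposition \ref{thm14} and invoking the corresponding scalar result from \cite{p.Balazs.09}. Your write-up is in fact more detailed than what the paper provides, and your bookkeeping remark about the order of analysis/synthesis sequences is resolved exactly as in the proof of Proposition \ref{thm14}, where $\mathbb{M}_{m,\Lambda,\Theta} = \mathbb{M}_{m',\Phi,\Psi}$.
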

%@@@@@@@@@@@@@@@@@@@@@@@@@@@@@@@@@@@@@@@@@@@@@@@@@@@@@@@@@@@@@@@

\section{Controlled g-frames}
%********
In  this section we introduce the concept of controlled g-frames
and we show that the sequence induced by each controlled g-frame
is a controlled frame. We also show that controlled g-frames are
equivalent to standard g-frames.
%********
\begin{defn}\label{thm25}  A g-frame controlled by an operator
$C \in GL(\mathcal{H})$ or $C$-controlled g-frame is a family of
vectors $\Lambda=\{ \Lambda_{i} \}_{i \in J}$, such that there
exist two
constants   $  m_{CL} > 0$ and $ M_{CL} < \infty$ satisfying\\

$$ m_{CL} \|f\|^{2} \leq \sum _{i \in J}\langle
\Lambda_{i}C^{\star}f,\Lambda_{i}f \rangle \leq M_{CL}\|f\|^{2},$$
%********
for all $f \in \mathcal{H}$. The controlled g-frame operator is
defined by
$$S_{C}f=\sum_{i \in J}\Lambda_{i}^{\star}\Lambda_{i}C^{\star}f, f \in \mathcal{H}.$$
\end{defn}

%--------------------------------------------------------------------------
\begin{prop}\label{thm7}
The sequence induced by each  controlled g-frame is a controlled
frame.
\end{prop}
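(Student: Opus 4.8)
The plan is to translate the controlled g-frame condition into the language of the induced sequence, using exactly the dictionary provided by Theorem \ref{thm5} and the formula $\Lambda_i f = \sum_{k\in K_i}\langle f,\psi_{i,k}\rangle e_{i,k}$. Given a $C$-controlled g-frame $\Lambda=\{\Lambda_i\}_{i\in J}$ with control operator $C\in GL(\mathcal{H})$, I would let $\{\psi_{i,k}: i\in J, k\in K_i\}$ be the sequence induced by $\{\Lambda_i\}$ with respect to a fixed choice of orthonormal bases $\{e_{i,k}:k\in K_i\}$ for $\mathcal{H}_i$, and show that this sequence is a frame controlled by the \emph{same} operator $C$, i.e. that there are constants $m_{CL}>0$ and $M_{CL}<\infty$ with
$$
m_{CL}\|f\|^2 \leq \sum_{i\in J}\sum_{k\in K_i} \langle f,\psi_{i,k}\rangle\langle C\psi_{i,k},f\rangle \leq M_{CL}\|f\|^2, \qquad f\in\mathcal{H}.
$$

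The key computation is to expand the middle term of the controlled g-frame inequality. For fixed $i$ and $f,g\in\mathcal{H}$ we have $\Lambda_i g = \sum_{k\in K_i}\langle g,\psi_{i,k}\rangle e_{i,k}$, and since $\{e_{i,k}\}_k$ is orthonormal in $\mathcal{H}_i$,
$$
\langle \Lambda_i C^\star f, \Lambda_i f\rangle = \Big\langle \sum_{k\in K_i}\langle C^\star f,\psi_{i,k}\rangle e_{i,k}, \sum_{k\in K_i}\langle f,\psi_{i,k}\rangle e_{i,k}\Big\rangle = \sum_{k\in K_i} \langle C^\star f,\psi_{i,k}\rangle\, \overline{\langle f,\psi_{i,k}\rangle}.
$$
Now $\langle C^\star f,\psi_{i,k}\rangle = \langle f, C\psi_{i,k}\rangle = \overline{\langle C\psi_{i,k},f\rangle}$, so $\langle C^\star f,\psi_{i,k}\rangle\,\overline{\langle f,\psi_{i,k}\rangle} = \overline{\langle C\psi_{i,k},f\rangle}\,\overline{\langle f,\psi_{i,k}\rangle} = \overline{\langle f,\psi_{i,k}\rangle\langle C\psi_{i,k},f\rangle}$. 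Summing over $i\in J$ and using that the left-hand side $\sum_{i\in J}\langle\Lambda_i C^\star f,\Lambda_i f\rangle$ is real (being pinched between $m_{CL}\|f\|^2$ and $M_{CL}\|f\|^2$), I conclude
$$
\sum_{i\in J}\langle \Lambda_i C^\star f,\Lambda_i f\rangle = \sum_{i\in J}\sum_{k\in K_i}\langle f,\psi_{i,k}\rangle\langle C\psi_{i,k},f\rangle,
$$
so the controlled g-frame inequalities for $\{\Lambda_i\}$ become verbatim the controlled frame inequalities for $\{\psi_{i,k}\}$ with the same bounds and the same control operator $C$. One should also note in passing that $\{\psi_{i,k}\}$ is a frame (hence, in particular, a Bessel sequence so all the series converge absolutely) by Theorem \ref{thm5}(i), since a $C$-controlled g-frame is in particular a g-frame; strictly this deserves a line checking that taking $f=g$ and using $C\in GL(\mathcal{H})$ gives genuine two-sided bounds, but this is routine.

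The main obstacle, such as it is, is purely bookkeeping: one has to be careful about the order of summation and complex conjugation (the induced-sequence identity involves $\langle f,\psi_{i,k}\rangle\langle C\psi_{i,k},f\rangle$, which is the product appearing in the definition of a controlled frame, and one must verify the conjugates line up), and one has to justify interchanging $\sum_{i\in J}$ with $\sum_{k\in K_i}$, which is legitimate because the double sum converges absolutely once we know $\{\psi_{i,k}\}$ is Bessel and $C$ is bounded. No genuinely new idea beyond Theorem \ref{thm5} is needed; the content is that the control operator $C$ passes through the induced-sequence correspondence unchanged. I would close by remarking that the controlled frame operator $S_C f = \sum_{i,k}\langle f,\psi_{i,k}\rangle C\psi_{i,k}$ likewise coincides with the controlled g-frame operator $S_C f = \sum_{i\in J}\Lambda_i^\star\Lambda_i C^\star f$, by the same expansion, which is the natural companion statement.
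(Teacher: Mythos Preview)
Your proposal is correct and follows essentially the same route as the paper: expand $\Lambda_i C^\star f$ via the induced sequence, use orthonormality of $\{e_{i,k}\}$ to rewrite $\sum_i\langle\Lambda_i C^\star f,\Lambda_i f\rangle$ as the double sum $\sum_{i,k}\langle f,C\psi_{i,k}\rangle\langle\psi_{i,k},f\rangle$, and observe this is precisely the controlled-frame middle term (up to conjugation, which you handle and the paper glosses over). Your additional care with conjugates, absolute convergence, and the companion identity for $S_C$ goes a bit beyond the paper's terse argument but adds no new ingredient; one small caution is that invoking ``a $C$-controlled g-frame is in particular a g-frame'' is exactly what Proposition~\ref{thm26} establishes \emph{using} this result, so for the Bessel/convergence justification you should argue directly from $C\in GL(\mathcal{H})$ rather than cite that fact.
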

\begin{proof}  Let $\Lambda_{i} \in \mathcal{L}(\mathcal{H},\mathcal{H}_{i})$ and
$\Lambda_{i}f=\sum_{k \in K_{i}}\langle f,\psi_{i,k}\rangle
e_{i,k}$, where $\{\psi_{i,k}: i \in J, k \in K_{i} \}$ is the
sequence induced by $\{\Lambda_{i}  : i \in J\}$ with respect to
$\{e_{i,k}:k \in K_{i}\}$. Hence $\Lambda_{i}C^{\star}f=\sum_{k
\in K_{i}}\langle f,C\psi_{i,k}\rangle e_{i,k}$. Then $$\sum _{i
\in J}\langle \Lambda_{i}C^{\star}f,\Lambda_{i}f \rangle=\sum _{i
\in J}\sum_{k \in K_{i}}\langle f,C\psi_{i,k}\rangle\langle
\psi_{i,k},f\rangle,$$and$$S_{C}f=\sum_{i \in
J}\Lambda_{i}^{\star}\Lambda_{i}C^{\star}f=\sum_{i \in J}\sum_{k
\in K_{i}}\langle f,C\psi_{i,k}\rangle \psi_{i,k}.$$

\end{proof}

%-----------------------------------------------------------------------------------------------------
\begin{prop}\label{thm26} Let $C \in
GL(\mathcal{H})$ and $\Lambda=\{ \Lambda_{i} \}_{i \in J}$ be a
$C$-controlled g-frame in $\mathcal{H}$. Then $\Lambda$ is a
classical g-frame. Furthermore if $S$ is a g-frame operator we
have $SC^{\star} = CS$ and so
$$\sum_{i \in J}\Lambda_{i}^{\star}\Lambda_{i}C^{\star}f=C\sum_{i
\in J}\Lambda_{i}^{\star}\Lambda_{i}f.$$

\end{prop}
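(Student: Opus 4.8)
The statement has two parts: first, that a $C$-controlled g-frame is an ordinary g-frame, and second, that the g-frame operator $S$ commutes with $C$ in the twisted sense $SC^{\star}=CS$. My plan is to reduce everything to the corresponding fact about controlled frames via the induced-sequence machinery already set up in Theorem \ref{thm5} and Proposition \ref{thm7}.

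For the first part, let $\{\psi_{i,k}\}$ be the sequence induced by $\{\Lambda_i\}_{i\in J}$ with respect to an orthonormal basis $\{e_{i,k}:k\in K_i\}$. By Proposition \ref{thm7}, the controlled g-frame inequalities
$$
m_{CL}\|f\|^2 \le \sum_{i\in J}\langle \Lambda_i C^{\star}f,\Lambda_i f\rangle \le M_{CL}\|f\|^2
$$
are exactly the statement that $\sum_{i,k}\langle f,C\psi_{i,k}\rangle\langle\psi_{i,k},f\rangle$ lies between $m_{CL}\|f\|^2$ and $M_{CL}\|f\|^2$, i.e. $\{\psi_{i,k}\}$ is a frame controlled by $C$. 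It is known (this is the content of \cite{p.Balazs.06}, the reference the paper already invokes for controlled frames) that every $C$-controlled frame is an ordinary frame; hence $\{\psi_{i,k}\}$ is a frame, and then Theorem \ref{thm5}(i) gives that $\{\Lambda_i\}_{i\in J}$ is a (classical) g-frame.

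For the second part, since $\{\Lambda_i\}$ is now a genuine g-frame, its g-frame operator $S$ is the bounded, positive, invertible operator $Sf=\sum_i\Lambda_i^{\star}\Lambda_i f$, and by Theorem \ref{thm5}(ii) it coincides with the frame operator $S_{\Psi}f=\sum_{i,k}\langle f,\psi_{i,k}\rangle\psi_{i,k}$ of the induced frame. The controlled frame operator from Proposition \ref{thm7} is $S_C f=\sum_{i,k}\langle f,C\psi_{i,k}\rangle\psi_{i,k}$; expanding $C\psi_{i,k}=CS_{\Psi}^{-1}S_{\Psi}\psi_{i,k}$ is not quite the route — instead I would note directly that $S_C f = \sum_{i,k}\langle C^{\star}f,\psi_{i,k}\rangle\psi_{i,k} = S_{\Psi}C^{\star}f = SC^{\star}f$, which also equals $\sum_i\Lambda_i^{\star}\Lambda_i C^{\star}f$, i.e. the displayed right-hand side without the $C$ pulled out. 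To get $SC^{\star}=CS$, I invoke the fact from \cite{p.Balazs.06} that for a controlled frame the controlled frame operator is self-adjoint and positive (this is forced by the real inequalities above, since $\langle S_C f,f\rangle = \sum_i\langle\Lambda_i C^{\star}f,\Lambda_i f\rangle$ is real for all $f$, hence $S_C$ is self-adjoint); then $SC^{\star} = S_C = S_C^{\star} = (SC^{\star})^{\star} = CS^{\star} = CS$, using $S=S^{\star}$. Finally $CS=SC^{\star}$ applied to $f$ gives $C\sum_i\Lambda_i^{\star}\Lambda_i f = \sum_i\Lambda_i^{\star}\Lambda_i C^{\star}f$, which is the last displayed identity.

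The only real obstacle is justifying that $S_C$ is self-adjoint; everything else is a bookkeeping translation through the induced sequence. That self-adjointness follows cleanly from the observation that $\langle S_C f,f\rangle$ is real and positive for every $f\in\mathcal{H}$, which forces $S_C = S_C^{\star}$ on a complex Hilbert space via the polarization identity. With that in hand the twisted commutation relation drops out in one line as above.
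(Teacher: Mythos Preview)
Your proof is correct and follows essentially the same route as the paper's: reduce to the induced sequence via Proposition~\ref{thm7}, invoke the known result from \cite{p.Balazs.06} that a $C$-controlled frame is a classical frame, and then use Theorem~\ref{thm5}(ii) to identify the g-frame operator with the frame operator of the induced sequence. The only difference is that where the paper simply cites Proposition~3.2 of \cite{p.Balazs.06} for the commutation relation $SC^{\star}=CS$, you unpack that step directly by observing that $S_C=SC^{\star}$ has real quadratic form (from the controlled g-frame inequalities), hence is self-adjoint by polarization, and then $SC^{\star}=S_C=S_C^{\star}=CS$; this is a minor and welcome elaboration, not a different approach.
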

\begin{proof}
 Since $\Lambda=\{\Lambda_{i} \}_{i \in J}$ is a $C$-controlled g-frame, by Proposition \ref{thm7}, we
have $\Lambda_{i}f=\sum_{k \in K_{i}}\langle f,\psi_{i,k}\rangle
e_{i,k}$, where $\{\psi_{i,k}\}$ is a $C$-controlled frame. By
Proposition 3.2 of \cite{p.Balazs.06}, $\{\psi_{i,k}\}$ is a
classical frame and so $\Lambda=\{ \Lambda_{i} \}_{i \in J}$ is a
classical g-frame. By Theorem \ref{thm5}, the g-frame operator for
$\{\Lambda_{i}\}_{i \in J}$ coincides with the frame operator for
$\{\psi_{i,k}: i \in J, k \in K_{i} \}$ and the proof is complete
by using Proposition \ref{thm7} and  Proposition 3.2 of
\cite{p.Balazs.06}.
\end{proof}
%----------------------------------------------------------------------------------
%aziz
%______________________________________________________
\begin{prop}\label{thm27}Let $C \in GL(\mathcal{H})$ be self-adjoint. The family $\Lambda$  is a g-frame for
$\mathcal{H}$ controlled by $C$ if and only if it is a (classical)
g-frame for $H$, $C$ is positive and commutes with the g-frame
operator $S$.
\end{prop}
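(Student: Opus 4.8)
The plan is to reduce the controlled g-frame statement to the corresponding statement for ordinary frames, which is the content of Proposition 3.2 (or its analogue) of \cite{p.Balazs.06}, exactly as was done in Proposition~\ref{thm26}. First I would pass to the induced sequence: write $\Lambda_{i}f = \sum_{k \in K_{i}} \langle f, \psi_{i,k}\rangle e_{i,k}$, where $\{\psi_{i,k} : i \in J, k \in K_{i}\}$ is the sequence induced by $\{\Lambda_{i}\}_{i\in J}$ with respect to a fixed orthonormal basis $\{e_{i,k} : k \in K_{i}\}$ of $\mathcal{H}_{i}$. By Proposition~\ref{thm7}, $\Lambda$ is a $C$-controlled g-frame if and only if $\{\psi_{i,k}\}$ is a $C$-controlled frame, and by Theorem~\ref{thm5}(ii) the g-frame operator $S$ of $\{\Lambda_{i}\}_{i\in J}$ coincides with the frame operator of $\{\psi_{i,k}\}$.

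Next I would invoke the frame-theoretic characterization of controlled frames with a self-adjoint controller from \cite{p.Balazs.06}: for self-adjoint $C \in GL(\mathcal{H})$, the sequence $\{\psi_{i,k}\}$ is a $C$-controlled frame for $\mathcal{H}$ if and only if it is an ordinary frame, $C$ is positive, and $C$ commutes with the frame operator. Granting that statement, the forward direction of the proposition follows immediately: if $\Lambda$ is controlled by $C$, then $\{\psi_{i,k}\}$ is a $C$-controlled frame, hence an ordinary frame with $C$ positive and $CS = SC$ (using that the frame operator of $\{\psi_{i,k}\}$ is $S$), which by Theorem~\ref{thm5}(i) means $\Lambda$ is a classical g-frame with $C$ positive and commuting with $S$. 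The converse is the same chain read backwards: if $\Lambda$ is a classical g-frame, then $\{\psi_{i,k}\}$ is a frame by Theorem~\ref{thm5}(i); if in addition $C$ is positive and $CS = SC$, then $\{\psi_{i,k}\}$ is a $C$-controlled frame by the cited frame result, and hence $\Lambda$ is a g-frame controlled by $C$ by Proposition~\ref{thm7}.

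For the bookkeeping I would make the commutation relation explicit: since $C$ is self-adjoint, the controlled g-frame operator of Definition~\ref{thm25} reads $S_{C}f = \sum_{i\in J}\Lambda_{i}^{\star}\Lambda_{i}C f = SCf$, while Proposition~\ref{thm26} (applied with a general $C$) already gives $SC^{\star} = CS$; with $C = C^{\star}$ this is precisely $SC = CS$, so $S_{C} = SC = CS$ is self-adjoint, and one can read the two-sided estimate $m_{CL}\|f\|^{2} \le \langle S_{C}f, f\rangle \le M_{CL}\|f\|^{2}$ as the assertion that $SC$ is a positive invertible operator; combined with positivity and invertibility of $S$ this forces $C = S^{-1}(SC)$ to be positive when $S$ and $SC$ commute, and conversely. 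In fact this gives an alternative, purely operator-theoretic route to the whole proposition that bypasses \cite{p.Balazs.06} entirely, using only that a product of two commuting positive operators is positive and that $S$ is bounded below.

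The main obstacle is simply making sure the cited frame-level statement is applied in the correct form — in particular that ``$C$ commutes with the frame operator of the induced sequence'' translates, via Theorem~\ref{thm5}(ii), to ``$C$ commutes with the g-frame operator $S$'' with no change of operator. There is no serious analytic difficulty; the content is entirely in the transfer between the g-frame and the induced-frame pictures, which Theorem~\ref{thm5} and Proposition~\ref{thm7} have already set up. If one prefers the self-contained argument, the only slightly delicate point is the implication ``$SC$ positive and $SC = CS$ $\Rightarrow$ $C$ positive'': this follows because $C = S^{-1/2}(S^{-1/2}(SC)S^{-1/2})S^{1/2}$ is similar to the positive operator $S^{-1/2}(SC)S^{-1/2}$ and is itself self-adjoint, hence positive. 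I would present the short proof via \cite{p.Balazs.06} as the main line and leave this remark implicit.
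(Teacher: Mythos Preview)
Your proposal is correct and follows essentially the same route as the paper's one-line proof: reduce to the induced sequence via Proposition~\ref{thm7} and Theorem~\ref{thm5}, then invoke the frame-level characterization from \cite{p.Balazs.06} (the paper cites Proposition~3.3 there rather than~3.2, together with Proposition~\ref{thm26}). Your additional self-contained operator-theoretic argument for the positivity of $C$ is a nice bonus not present in the paper.
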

\begin{proof}
The assertion follows from Propositions \ref{thm7}, \ref{thm26}
and Proposition Proposition 3.3 of \cite{p.Balazs.06}.
\end{proof}
%-----------------------------------------------------------------------------
\begin{cor}\label{thm28} Let $C$ be a self-adjoint operator and $\Lambda$ be a C-controlled
g-frame. Denote by $(m_{CS},M_{CS})$, $(m,M)$ and $(m_{C},M_{C})$
any bounds for the controlled g-frame operator $S_{C}$, the
g-frame operator $S$, and the operator $C$, respectively. Then,

\begin{itemize}
\item[i)] $m'= \frac{m_{CL}}{M_{C}}$, $ M' =\frac{ M_{CL}}{m_{C}}$
are bounds for $S$;

\item[ii)] $m'_{C} = \frac{m_{CL}}{M}$ , $M'_{C }= \frac{M_{CL}
}{m}$ are bounds for $C$;

\item[iii)] $m'_{CL }= mm_{C}$, $M'_{CL }= MM_{C}$ are bounds for
$S_{C}$.

\end{itemize}
\end{cor}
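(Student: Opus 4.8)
The plan is to collapse the whole corollary onto one operator identity plus a single monotonicity lemma, and then just chain inequalities. First I would record, using Proposition \ref{thm27}, that a $C$-controlled g-frame $\Lambda$ is an ordinary g-frame for which $C$ is \emph{positive} and \emph{commutes} with the g-frame operator $S$; and, using Proposition \ref{thm26}, that $S_C = SC^{\star} = SC = CS$. Since $C = C^{\star}$ and $CS = SC$, the operator $S_C$ is self-adjoint and $\langle S_C f, f\rangle = \sum_{i\in J}\langle \Lambda_i C^{\star}f, \Lambda_i f\rangle$ for every $f$, so the defining inequalities of a $C$-controlled g-frame are exactly $m_{CL}\,I \leq S_C \leq M_{CL}\,I$; similarly $m\,I \leq S \leq M\,I$ and $m_C\,I \leq C \leq M_C\,I$ record that $(m,M)$ and $(m_C,M_C)$ are bounds for $S$ and $C$.

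The one lemma I would isolate is elementary: if $A,B$ are commuting positive bounded self-adjoint operators and $\mu I \leq B \leq \nu I$, then $\mu A \leq AB \leq \nu A$. Indeed $A(B-\mu I) = A^{1/2}(B-\mu I)A^{1/2} \geq 0$ because $A^{1/2}$ commutes with $B$ (hence with $B-\mu I$), and symmetrically for the upper estimate; in particular the product of two commuting positive self-adjoint operators is positive. Applying this with $(A,B) = (S,C)$ gives $m_C S \leq SC \leq M_C S$, and with $(A,B) = (C,S)$ gives $m\,C \leq CS = SC \leq M\,C$.

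Now the three statements follow by composition. For (iii): $m m_C\,I \leq m_C S \leq SC = S_C \leq M_C S \leq M M_C\,I$, so $(m m_C, M M_C)$ are bounds for $S_C$. For (i): from $m_{CL}I \leq S_C = SC \leq M_C S$ one gets $S \geq (m_{CL}/M_C)I$, and from $m_C S \leq SC = S_C \leq M_{CL}I$ one gets $S \leq (M_{CL}/m_C)I$, so $m' = m_{CL}/M_C$, $M' = M_{CL}/m_C$ are bounds for $S$. Part (ii) is the identical argument with the roles of $S$ and $C$ interchanged, i.e.\ from $m_{CL}I \leq SC \leq M\,C$ and $m\,C \leq SC \leq M_{CL}I$ one reads off $m'_C = m_{CL}/M$ and $M'_C = M_{CL}/m$.

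The only real obstacle is the monotonicity step $A(B-\mu I)\geq 0$ for commuting positive operators (and the attendant positivity of the product); everything after that is bookkeeping. One must therefore be careful to have genuinely established both $S,C\geq 0$ and $SC = CS$ (which is where Propositions \ref{thm27} and \ref{thm26} are used) before invoking the square-root argument. Alternatively, since by Proposition \ref{thm7} the sequence $\{\psi_{i,k}\}$ induced by $\Lambda$ is a $C$-controlled frame and, by Theorem \ref{thm5}, its frame operator is exactly $S_C$ while the frame operators of $\{\psi_{i,k}\}$ and of $C$ agree with $S$ and $C$, one may instead simply transfer the corresponding corollary for controlled frames from \cite{p.Balazs.06}.
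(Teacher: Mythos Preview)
Your argument is correct. The paper itself gives no proof for this corollary; in keeping with the pattern of Section~4, the intended justification is simply the transfer you mention at the end: by Proposition~\ref{thm7} the induced sequence $\{\psi_{i,k}\}$ is a $C$-controlled frame, by Theorem~\ref{thm5} its frame operator is $S$, and by the computation in Proposition~\ref{thm7} its controlled frame operator is $S_C$, so the bounds follow verbatim from the corresponding corollary for controlled frames in \cite{p.Balazs.06}.

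Your main route is a genuinely different, self-contained operator-theoretic proof. You extract from Propositions~\ref{thm26} and~\ref{thm27} the single identity $S_C = SC = CS$ with $S,C$ positive and commuting, then use the elementary monotonicity $\mu A \le AB \le \nu A$ for commuting positive operators (via $A^{1/2}(B-\mu I)A^{1/2}\ge 0$) to read off all three pairs of bounds directly. This is cleaner than the transfer argument in that it never leaves the g-frame setting and makes explicit exactly which operator inequality drives each bound; the paper's implicit route, by contrast, offloads all the work to \cite{p.Balazs.06} but requires checking that the induced-sequence dictionary (Theorem~\ref{thm5}, Proposition~\ref{thm7}) carries \emph{all three} operators $S$, $C$, $S_C$ across correctly. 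Your closing sentence about the alternative is slightly garbled (``the frame operators of $\{\psi_{i,k}\}$ and of $C$'' should read ``the frame operator of $\{\psi_{i,k}\}$ agrees with $S$, while the controlling operator $C$ is unchanged''), but this does not affect the main argument.
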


%+@@@@@@@@@@@@@@@@@@@@@@@@@@@@@@@@@@@@@@@@@@@@@@@@@@@@@@@@@@@@@@@@@@@@@@@@@@@@@@@@@@@@@@@@@@@@@@@@@@
\section{weighted g-frames}

In  this section we investigate the concept of weighted g-frames,
and show that the sequence induced by each weighted g-frame is a
weighted frame.
%*******

%*******

\begin{defn}\label{thm29}Let $\{\Lambda_{i}\}_{i \in J}$ and $(w_{i} : i \in J)$ be a
sequence of positive weights. This pair is called a weighted
g-frame or a $w$-g-frame of $\mathcal{H}$ if there exist constants
$m>0$, $M<\infty$ such that
$$m\|f\|^{2}\leq\sum_{i\in J}w_{i}^{2}\|\Lambda_{i}f\| ^{2}\leq M\|
f\|^{2}, f\in \mathcal{H}.$$ Assume now that the restriction on
the weights is lifted, i.e., $(w_{i})\subseteq \mathbb{C}$. Then
we call $(w_{i}\Lambda_{i})$ a weighted g-frame if this sequence
forms a g-frame, i.e.,
$$m\|f\|^{2}\leq\sum_{i\in J}|w_{i}|^{2}\|\Lambda_{i}f\| ^{2}\leq
M\| f\|^{2}, f\in \mathcal{H}.$$

\end{defn}
\begin{prop}\label{thm40} The sequence
induced by each weighted  g-frame is a weighted frame.
\end{prop}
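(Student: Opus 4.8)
The plan is to mimic the pattern used throughout this paper: pass from the g-frame to its induced sequence, apply the known frame-theoretic fact, and pull the conclusion back. Concretely, let $\{\Lambda_i\}_{i\in J}$ be a sequence in $\mathcal{L}(\mathcal{H},\mathcal{H}_i)$, let $\{e_{i,k}:k\in K_i\}$ be an orthonormal basis for $\mathcal{H}_i$, and let $\{\psi_{i,k}:i\in J,\ k\in K_i\}$ be the induced sequence, so that $\Lambda_i f=\sum_{k\in K_i}\langle f,\psi_{i,k}\rangle e_{i,k}$ and hence $\|\Lambda_i f\|^2=\sum_{k\in K_i}|\langle f,\psi_{i,k}\rangle|^2$. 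Given weights $(w_i)$, multiply through by $|w_i|^2$ and sum: $\sum_{i\in J}|w_i|^2\|\Lambda_i f\|^2=\sum_{i\in J}\sum_{k\in K_i}|w_i|^2|\langle f,\psi_{i,k}\rangle|^2=\sum_{i\in J}\sum_{k\in K_i}| \langle f,w_i\psi_{i,k}\rangle|^2$. So if we set $w'_{i,k}=w_i$ for all $k\in K_i$, the double sum is exactly the quantity $\sum |w'_{i,k}|^2|\langle f,\psi_{i,k}\rangle|^2$ attached to the weighted sequence $(w'_{i,k}\psi_{i,k})$.

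First I would record this computation as the core identity. Then, assuming $(w_i\Lambda_i)_{i\in J}$ is a weighted g-frame with bounds $m,M$, the identity shows that $m\|f\|^2\le\sum_{i,k}|w'_{i,k}|^2|\langle f,\psi_{i,k}\rangle|^2\le M\|f\|^2$ for all $f\in\mathcal{H}$, i.e. $(w'_{i,k}\psi_{i,k}:i\in J,\ k\in K_i)$ satisfies the weighted-frame inequalities with the same bounds $m,M$. Hence the induced sequence, reweighted by $w'$, is a weighted frame for $\mathcal{H}$. This is precisely the assertion of the proposition (the induced sequence of the weighted g-frame $(w_i\Lambda_i)$, equipped with the inherited weights $w'_{i,k}=w_i$, is a weighted frame), so the proof is complete. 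One should note that the converse direction is equally immediate from the same identity, should one wish to phrase the statement as an equivalence.

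Honestly, there is no real obstacle here: the only point requiring a word of care is the bookkeeping of indices — making explicit that the weight $w_i$ on the $i$-th g-frame operator corresponds to the constant weight $w'_{i,k}=w_i$ across the block $\{k\in K_i\}$ of the induced sequence — and interchanging the order of the (nonnegative-term) double sum, which is justified by Tonelli. Everything else is the by-now-standard dictionary of Theorem \ref{thm5} between $\{\Lambda_i\}$ and $\{\psi_{i,k}\}$. I would keep the write-up to essentially one display plus two sentences, in the same terse style as the proofs of Proposition \ref{thm14} and Proposition \ref{thm7}.
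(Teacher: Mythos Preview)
Your proposal is correct and follows essentially the same argument as the paper: both compute the identity $\sum_{i\in J}|w_i|^2\|\Lambda_i f\|^2=\sum_{i\in J}\sum_{k\in K_i}|w_i|^2|\langle f,\psi_{i,k}\rangle|^2$ and then set $w'_{i,k}=w_i$ to conclude that $\{w'_{i,k}\psi_{i,k}\}$ is a weighted frame with the same bounds. Your write-up is slightly more detailed (mentioning Tonelli and the converse), but the mathematical content is identical.
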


\begin{proof} If $\{w_{i}\Lambda_{i} \}_{i \in J}$ is a weighted
g-frame then, we have $w_{i}\Lambda_{i}f=\sum_{k \in K_{i}}\langle
f,w_{i}\psi_{i,k}\rangle e_{i,k}$ and so
$$\sum_{i\in J}|w_{i}|^{2}\|\Lambda_{i}f\| ^{2}=\sum_{i\in
J}\sum_{k \in K_{i}}|w_{i}|^{2}|\langle f,\psi_{i,k}\rangle|^{2}.
$$ So $\{w'_{i,k}\psi_{i,k}:i \in J , k \in K_{i}\}$ is a weighted
frame with $w'_{i,k}=w_{i}$ for $i \in J , k \in K_{i}$.

\end{proof}

\begin{prop}\label{thm31} Let $C \in GL(\mathcal{H})$ be self-adjoint and
$\{\Lambda_{i}\}_{i \in J}$ be a controlled g-frame and assume
$C\Lambda_{i}^{\star}=w_{i}\Lambda_{i}^{\star}$. Then the sequence
$(w_{i})$ is semi-normalized and positive. Furthermore $C =
M_{w,\Lambda,\widetilde{\Lambda}}.$
\end{prop}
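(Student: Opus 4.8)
The plan is to work entirely with the controlled g-frame operator $S_C$ and the ordinary g-frame operator $S$, using the identity $C\Lambda_i^\star = w_i\Lambda_i^\star$ to rewrite $S_C$ in terms of $S$ and the weights. First I would observe that, since $\{\Lambda_i\}_{i\in J}$ is a $C$-controlled g-frame and $C$ is self-adjoint, Proposition \ref{thm27} applies: $\{\Lambda_i\}_{i\in J}$ is an ordinary g-frame, $C$ is positive, and $C$ commutes with the g-frame operator $S$. Positivity of $C$ will be the first ingredient for showing the $w_i$ are positive: taking adjoints in $C\Lambda_i^\star = w_i\Lambda_i^\star$ gives $\Lambda_i C = \overline{w_i}\,\Lambda_i$, and then for any $g_i\in\mathcal H_i$ with $\Lambda_i^\star g_i \neq 0$ we get $\langle C\Lambda_i^\star g_i,\Lambda_i^\star g_i\rangle = w_i\|\Lambda_i^\star g_i\|^2$; since the left side is $\geq 0$ and (by invertibility of $C$, as $C\in GL(\mathcal H)$) is in fact $>0$ whenever $\Lambda_i^\star g_i\neq 0$, we conclude $w_i>0$. (If for some $i$ we had $\Lambda_i^\star = 0$, the value of $w_i$ would be immaterial, but g-completeness of a g-frame rules this out, so each $w_i$ is genuinely determined and positive.)

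Next I would establish semi-normalization, i.e. uniform two-sided bounds $a\le w_i\le b$. Using $\Lambda_i C = \overline{w_i}\Lambda_i = w_i\Lambda_i$ (the $w_i$ being real), compute the controlled g-frame operator:
$$S_C f = \sum_{i\in J}\Lambda_i^\star\Lambda_i C^\star f = \sum_{i\in J}\Lambda_i^\star\Lambda_i C f = \sum_{i\in J}w_i\,\Lambda_i^\star\Lambda_i f.$$
Thus $S_C$ is exactly the g-frame operator of the weighted family $(\sqrt{w_i}\Lambda_i)_{i\in J}$, so the controlled g-frame inequalities in Definition \ref{thm25} read $m_{CL}\|f\|^2 \le \sum_i w_i\|\Lambda_i f\|^2 \le M_{CL}\|f\|^2$. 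Comparing with the frame inequalities $A\|f\|^2\le\sum_i\|\Lambda_i f\|^2\le B\|f\|^2$ and using a standard separation/localization argument — restrict attention to vectors of the form $\Lambda_i^\star g_i$, or equivalently use that $\sum_i w_i\Lambda_i^\star\Lambda_i$ dominates $a\sum_i\Lambda_i^\star\Lambda_i$ iff $w_i\ge a$ for all $i$ — one extracts $a:=\inf_i w_i>0$ and $b:=\sup_i w_i<\infty$. Concretely, $\sqrt{w_i} = |S_C^{1/2}\Lambda_i^\star e_{i,k}| / |S^{1/2}\Lambda_i^\star e_{i,k}|$-type comparisons, or simply $\langle S_C\Lambda_i^\star g_i,\Lambda_i^\star g_i\rangle = w_i\langle S\Lambda_i^\star g_i,\Lambda_i^\star g_i\rangle$ combined with the operator-norm bounds $m_{CL} I \le S_C$, $S\le B I$, give $w_i \ge m_{CL}/B$, and symmetrically $w_i\le M_{CL}/A$. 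Hence $(w_i)$ is semi-normalized and positive.

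Finally, for the identity $C = M_{w,\Lambda,\widetilde\Lambda}$, recall $\widetilde\Lambda_i = \Lambda_i S^{-1}$ and that $\widetilde\Lambda_i^{\,\star} = S^{-1}\Lambda_i^\star$. Then
$$M_{w,\Lambda,\widetilde\Lambda} f = \sum_{i\in J} w_i\,\Lambda_i^\star\widetilde\Lambda_i f = \sum_{i\in J} w_i\,\Lambda_i^\star\Lambda_i S^{-1} f = \Big(\sum_{i\in J} w_i\,\Lambda_i^\star\Lambda_i\Big)S^{-1}f = S_C S^{-1} f,$$
using the computation of $S_C$ from the previous paragraph. So it remains to check $S_C S^{-1} = C$, i.e. $S_C = CS$. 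But $S_C f = \sum_i w_i\Lambda_i^\star\Lambda_i f = \sum_i \Lambda_i^\star(w_i\Lambda_i)f = \sum_i\Lambda_i^\star\Lambda_i C f = SC f$, and since $C$ commutes with $S$ (Proposition \ref{thm27} / \ref{thm26}), $SC = CS$; therefore $M_{w,\Lambda,\widetilde\Lambda} = S_C S^{-1} = CS S^{-1} = C$, as claimed. The main obstacle I anticipate is the semi-normalization step: turning the global operator inequalities for $S_C$ versus $S$ into the pointwise bounds $a\le w_i\le b$ requires the observation that $\{\Lambda_i^\star g_i : g_i\in\mathcal H_i\}$ has enough vectors to "see" $w_i$ individually, which is exactly where $C\in GL(\mathcal H)$ (invertibility, not just positivity) and g-completeness of the g-frame are used; the computation $C = M_{w,\Lambda,\widetilde\Lambda}$ is then a short formal manipulation once $S_C = CS$ is in hand.
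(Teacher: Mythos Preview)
Your argument is correct and takes a genuinely different route from the paper's. The paper proceeds by passing to the induced sequence $\{\psi_{i,k}\}$: from $C\Lambda_i^\star = w_i\Lambda_i^\star$ and $\Lambda_i^\star e_{i,k} = \psi_{i,k}$ one obtains $C\psi_{i,k} = w_i\psi_{i,k}$, and then the known frame-case result (Proposition~4.2 of \cite{p.Balazs.06}), together with Propositions~\ref{thm14} and~\ref{thm40}, immediately yields both conclusions. Your approach, by contrast, works intrinsically in the g-frame setting: you invoke Proposition~\ref{thm27} to obtain positivity of $C$ and the commutation $CS = SC$, compute $S_C = SC$ directly from the eigenvalue relation $\Lambda_i C = w_i\Lambda_i$, and then deduce $M_{w,\Lambda,\widetilde\Lambda} = S_C S^{-1} = CSS^{-1} = C$. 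The paper's method is shorter because it outsources the work to the already-established frame result; yours is self-contained and exposes the operator-theoretic structure without ever leaving the g-frame framework. One small simplification is available in your semi-normalization step: since each $w_i$ is an eigenvalue of the bounded, positive, invertible operator $C$, one has $\|C^{-1}\|^{-1} \le w_i \le \|C\|$ directly, so the comparison of $S_C$ and $S$ on the test vectors $\Lambda_i^\star g_i$ is unnecessary.
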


\begin{proof} We have
$\Lambda_{i}f=\sum_{k \in K_{i}}\langle f,\psi_{i,k}\rangle
e_{i,k}$ and so $\Lambda_{i}^{\star}f_{i}=\sum_{k \in
K_{i}}\langle f_{i},e_{i,k}\rangle \psi_{i,k}$ for all $f_{i} \in
\mathcal{H}_{i}$. Since
$C\Lambda_{i}^{\star}=w_{i}\Lambda_{i}^{\star}$, it is easy to
show that $C \psi_{i,k}=w_{i}\psi_{i,k}$. The conclusions follow
from Propositions \ref{thm14}, \ref{thm40} and  Proposition 4.2 of
\cite{p.Balazs.06}.
\end{proof}
The following Lemma can be proved by the same manner.
\begin{lem}\label{thm32} Let  $(w_{i} : i \in J)$ be a semi-normalized  real sequence with bounds $a,b$. Then
if  $\{\Lambda_{i}\}_{i \in J}$  is a g-frame with bounds $m$ and
$M$, then $\{w_{i}\Lambda_{i} \}_{i \in J}$ is also a g-frame with
bounds $a^{2}m$ and $b^{2}M$. The sequence
$\{w_{i}^{-1}\widetilde{\Lambda_{i}} \}_{i \in J}$ is a dual
g-frame of $\{w_{i}\Lambda_{i} \}_{i \in J}$.
\end{lem}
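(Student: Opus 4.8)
The plan is to reduce the statement to the corresponding known result for ordinary frames via the ``induced sequence'' machinery, exactly as in the preceding propositions of Sections~3--5. Let $\{e_{i,k} : k \in K_i\}$ be an orthonormal basis for $\mathcal{H}_i$, and let $\{\psi_{i,k} : i\in J, k\in K_i\}$ be the sequence induced by $\{\Lambda_i\}_{i\in J}$, so that $\Lambda_i f = \sum_{k\in K_i}\langle f,\psi_{i,k}\rangle e_{i,k}$. By Theorem~\ref{thm5}, $\{\psi_{i,k}\}$ is a frame for $\mathcal{H}$ with the same bounds $m,M$ as $\{\Lambda_i\}$, since the g-frame operator of $\{\Lambda_i\}$ coincides with the frame operator of $\{\psi_{i,k}\}$.

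First I would record that $\{w_i\Lambda_i\}_{i\in J}$ is a g-frame with bounds $a^2m$ and $b^2M$. This follows immediately from the definition: for every $f\in\mathcal{H}$,
$$a^2 m\|f\|^2 \leq a^2 \sum_{i\in J}\|\Lambda_i f\|^2 \leq \sum_{i\in J} |w_i|^2\|\Lambda_i f\|^2 \leq b^2\sum_{i\in J}\|\Lambda_i f\|^2 \leq b^2 M \|f\|^2,$$
using $a\leq |w_i|\leq b$. (Alternatively, one passes to the induced sequence $\{w_i\psi_{i,k}\}$, notes it is the sequence induced by $\{w_i\Lambda_i\}$ with respect to $\{e_{i,k}\}$, applies the analogous scalar statement for frames, and invokes Theorem~\ref{thm5}; this is the ``same manner'' the remark alludes to.) In particular $\{w_i\Lambda_i\}$ has a g-frame operator, call it $S_w$, which is bounded, positive and invertible.

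Next I would identify the dual. The canonical dual g-frame of $\{w_i\Lambda_i\}$ is $\{w_i\Lambda_i S_w^{-1}\}$; the claim is that the (not necessarily canonical) sequence $\{w_i^{-1}\widetilde{\Lambda_i}\}$, where $\widetilde{\Lambda_i}=\Lambda_i S^{-1}$ and $S$ is the g-frame operator of $\{\Lambda_i\}$, is also a dual. By the characterization of dual g-frames via the reconstruction formula, it suffices to check that for all $f\in\mathcal{H}$,
$$\sum_{i\in J} (w_i\Lambda_i)^{\star}(w_i^{-1}\widetilde{\Lambda_i})f = \sum_{i\in J} \Lambda_i^{\star}\Lambda_i S^{-1} f = S S^{-1} f = f,$$
where the cross terms $w_i \bar{w_i}^{-1} = w_i w_i^{-1}=1$ cancel because $w_i$ is real. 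This is the key computation, and it is short. One should also verify that $\{w_i^{-1}\widetilde{\Lambda_i}\}$ is itself a g-Bessel sequence (indeed a g-frame): since $\{\widetilde{\Lambda_i}\}$ has upper bound $1/m$ and $|w_i^{-1}|\leq 1/a$, the sequence $\{w_i^{-1}\widetilde{\Lambda_i}\}$ has upper g-frame bound $1/(a^2 m)$, so the series above converges in norm and the equality makes sense.

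The main obstacle, such as it is, is purely bookkeeping: making sure the conjugation on $w_i$ in the adjoint $(w_i\Lambda_i)^{\star} = \bar{w_i}\Lambda_i^{\star}$ is handled correctly, which is why the hypothesis that $(w_i)$ is a \emph{real} sequence is needed for the telescoping $\bar{w_i}\cdot w_i^{-1}=1$ to hold; for complex weights one would instead dualize with $\{\overline{w_i}^{-1}\widetilde{\Lambda_i}\}$. Everything else is a direct transcription of the frame-theoretic facts (or of Proposition~4.2-type statements in \cite{p.Balazs.06}) through Theorem~\ref{thm5}, so no genuinely new idea is required.
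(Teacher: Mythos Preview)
Your proof is correct. The paper itself does not prove this lemma; it only asserts that it ``can be proved by the same manner,'' referring to the induced-sequence reduction to the analogous frame result in \cite{p.Balazs.06}, which you also sketch as your alternative route. Your primary argument --- the direct chain of inequalities for the g-frame bounds together with the one-line reconstruction computation $\sum_{i\in J} (w_i\Lambda_i)^{\star}(w_i^{-1}\widetilde{\Lambda_i})f = SS^{-1}f = f$ --- is more elementary and avoids the detour through Theorem~\ref{thm5} entirely; it also makes explicit why the reality of $(w_i)$ is needed (so that $\bar{w_i}w_i^{-1}=1$). The one small point you might make explicit is the lower g-frame bound for $\{w_i^{-1}\widetilde{\Lambda_i}\}$, since a dual g-frame is required to be a g-frame and you only display the upper bound; but this follows from the first part applied to the g-frame $\{\widetilde{\Lambda_i}\}$ and the semi-normalized sequence $(w_i^{-1})$.
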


\begin{lem}\label{thm33} Let $\Lambda=\{\Lambda_{i}\}_{i \in J}$ be a g-frame and $w=(w_{i} : i \in J)$ be a positive
semi-normalized sequence. Then the multiplier $M_{w,\Lambda}$ is
the g-frame operator of the g-frame $\{\sqrt{w_{i}}\Lambda_{i}
\}_{i \in J}$ and therefore it is positive, self-adjoint and
invertible.
\end{lem}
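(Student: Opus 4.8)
\textbf{Proof proposal for Lemma \ref{thm33}.}

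The plan is to recognize $M_{w,\Lambda}$ as the g-frame operator associated with the weighted family $\{\sqrt{w_i}\Lambda_i\}_{i\in J}$ and then invoke the structural facts already recorded for g-frame operators. First I would recall from Definition \ref{thm29} together with Lemma \ref{thm32} (applied to the semi-normalized sequence $(\sqrt{w_i})$, whose bounds are $\sqrt a$ and $\sqrt b$ if $(w_i)$ has bounds $a,b$) that $\{\sqrt{w_i}\Lambda_i\}_{i\in J}$ is again a g-frame for $\mathcal{H}$; call its g-frame operator $S_{(\sqrt{w_i}\Lambda_i)}$. By the definition of the g-frame operator recalled in the introduction, we have for every $f\in\mathcal{H}$
$$S_{(\sqrt{w_i}\Lambda_i)}f=\sum_{i\in J}(\sqrt{w_i}\Lambda_i)^{\star}(\sqrt{w_i}\Lambda_i)f=\sum_{i\in J} w_i\,\Lambda_i^{\star}\Lambda_i f,$$
since $w_i>0$ is real so $\overline{\sqrt{w_i}}\sqrt{w_i}=w_i$. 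On the other hand, the g-Bessel multiplier with weight $w=(w_i)$ and both sequences equal to $\Lambda$ is, by definition, $\mathbb{M}_{w,\Lambda}f=\sum_{i\in J}w_i\,\Lambda_i^{\star}\Lambda_i f$. Hence the two operators agree on all of $\mathcal{H}$, i.e. $M_{w,\Lambda}=S_{(\sqrt{w_i}\Lambda_i)}$.

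Once this identification is made, the remaining assertions are immediate from the properties of g-frame operators quoted (from \cite{S. Wenchang.06}) in the introduction: the g-frame operator of any g-frame is bounded, self-adjoint, positive and invertible, with its norm squeezed between the g-frame bounds. Since $\{\sqrt{w_i}\Lambda_i\}_{i\in J}$ is a g-frame by Lemma \ref{thm32}, its g-frame operator $S_{(\sqrt{w_i}\Lambda_i)}=M_{w,\Lambda}$ therefore enjoys all of these properties, which is exactly the conclusion.

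The only place that needs any care — and the one I would flag as the main (albeit minor) obstacle — is verifying convergence of the series $\sum_{i\in J}w_i\Lambda_i^{\star}\Lambda_i f$ and the legitimacy of writing it as $\mathbb{M}_{w,\Lambda}f$: this is guaranteed because $w$ is semi-normalized, hence $w\in\ell^\infty$, so Proposition \ref{thm14} applies and $M_{w,\Lambda}=\mathbb{M}_{w,\Lambda,\Lambda}$ is well defined and bounded. With that in hand the argument is a short chain of identifications and no genuine difficulty remains; one could alternatively give a direct proof by passing to the induced sequence $\{\psi_{i,k}\}$ via Theorem \ref{thm5}(ii) and citing the corresponding fact for ordinary weighted frames, but the operator-level argument above is cleaner.
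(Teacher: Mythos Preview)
Your proof is correct and follows essentially the same route as the paper: invoke Lemma~\ref{thm32} to see that $\{\sqrt{w_i}\Lambda_i\}_{i\in J}$ is a g-frame, compute its g-frame operator as $\sum_{i\in J} w_i\Lambda_i^{\star}\Lambda_i$, identify this with $\mathbb{M}_{w,\Lambda}$, and read off positivity, self-adjointness and invertibility from the standard properties of g-frame operators. Your additional remarks on convergence via Proposition~\ref{thm14} and the alternative via induced sequences are sound but go slightly beyond what the paper spells out.
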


\begin{proof} By using Lemma \ref{thm32}, $\{\sqrt{w_{i}}\Lambda_{i} \}_{i \in J}$ is a g-frame
and if $S$ is the g-frame operator of it then
$$Sf=\sum_{i \in
J}(\sqrt{w_{i}}\Lambda_{i})^{\star}\sqrt{w_{i}}\Lambda_{i}f=\sum_{i
\in J}w_{i} \Lambda_{i}^{\star}\Lambda_{i}f$$ and
$$\mathbb{M}_{w,\Lambda}f=\sum_{i \in J}w_{i}
\Lambda_{i}^{\star}\Lambda_{i}f.$$ Therefore
$\mathbb{M}_{w,\Lambda}=S$ is positive, self-adjoint and
invertible.
\end{proof}
We end this section with the following theorem, which can be proved in exactly the same fashion as above.
\begin{thm}\label{thm34} Let $\Lambda=\{\Lambda_{i}\}_{i \in J}$ be a sequence and $w=(w_{i} : i \in J)$ be a positive,
semi-normalized sequence. Then the following properties are
equivalent:

\begin{itemize}
\item[i)] $\Lambda=\{\Lambda_{i}\}_{i \in J}$ is a
g-frame;\\
 \item[ii)] $\mathbb{M}_{w,\Lambda}$ is positive,
self-adjoint and
invertible operator;\\
 \item[iii)] There are constants $m>0$, $M<\infty$ such that\\

$$m\|f\|^{2}\leq\sum_{i\in J}w_{i}\|\Lambda_{i}f\| ^{2}\leq M\|
f\|^{2},$$
for all $f\in \mathcal{H}$;\\
 \item[iv)]  $\{\sqrt{w_{i}}\Lambda_{i} \}_{i \in J}$ is a
g-frame;\\
 \item[v)] $\mathbb{M}_{w',\Lambda}$ is a positive and
invertible operator, for any positive, semi-normalized sequence
$w'=(w'_{i} : i \in J)$;\\
 \item[vi)] $\{w_{i}\Lambda_{i} \in
\mathcal{L}(\mathcal{H},\mathcal{H}_{i}) : i \in J\}$ is a
g-frame.
\end{itemize}
\end{thm}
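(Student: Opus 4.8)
The plan is to prove the cyclic chain of equivalences by reducing everything to the induced-sequence picture and then invoking the corresponding frame-theoretic results already available in the literature (in the same way every earlier proposition in this section was handled). First I would fix an orthonormal basis $\{e_{i,k} : k \in K_i\}$ of each $\mathcal{H}_i$ and pass to the induced sequence $\{\psi_{i,k}\}$, so that $\Lambda_i f = \sum_{k \in K_i} \langle f, \psi_{i,k}\rangle e_{i,k}$. Under this correspondence, $\{w_i \Lambda_i\}$ induces $\{w'_{i,k}\psi_{i,k}\}$ with $w'_{i,k} = w_i$, $\{\sqrt{w_i}\,\Lambda_i\}$ induces $\{\sqrt{w_i}\,\psi_{i,k}\}$, and by Proposition \ref{thm14} together with Lemma \ref{thm33} the multiplier $\mathbb{M}_{w,\Lambda}$ coincides with the frame-multiplier $\mathbb{M}_{w',\Psi}$ and, when relevant, with the g-frame operator of $\{\sqrt{w_i}\Lambda_i\}$. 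The sum $\sum_{i \in J} w_i \|\Lambda_i f\|^2$ equals $\sum_{i,k} |w_i| \,|\langle f, \psi_{i,k}\rangle|^2$, so the weighted g-frame inequality in (iii) is exactly the weighted frame inequality for $\{\psi_{i,k}\}$ with weights $w'_{i,k}$.

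Next I would run the cycle. (i)$\Leftrightarrow$(vi) and (i)$\Leftrightarrow$(iv) are immediate from Theorem \ref{thm5} combined with Lemma \ref{thm32}: since $w$ is positive and semi-normalized with bounds $a,b$, the sequences $\{w_i\Lambda_i\}$ and $\{\sqrt{w_i}\Lambda_i\}$ are g-frames exactly when $\{\Lambda_i\}$ is, with bounds rescaled by $a^2,b^2$ (resp. $a,b$). (i)$\Leftrightarrow$(ii) is Lemma \ref{thm33} in one direction; for the converse, invertibility and positivity of $\mathbb{M}_{w,\Lambda}=S_{\{\sqrt{w_i}\Lambda_i\}}$ forces $\{\sqrt{w_i}\Lambda_i\}$ to be a g-frame, hence (iv), hence (i). (i)$\Leftrightarrow$(iii) follows by reading the inequality through the induced sequence and applying the corresponding weighted-frame characterization from \cite{p.Balazs.06} (Proposition 4.2 there), exactly as in the proof of Proposition \ref{thm31}; alternatively note $a^2 \sum_k |\langle f,\psi_{i,k}\rangle|^2 \le w_i \|\Lambda_i f\|^2 \le b^2 \sum_k |\langle f,\psi_{i,k}\rangle|^2$ after summing, so (iii) holds iff $\{\psi_{i,k}\}$ is a frame iff $\{\Lambda_i\}$ is a g-frame. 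Finally (v) trivially implies (ii) by taking $w'=w$; and (i)$\Rightarrow$(v) because, for \emph{any} positive semi-normalized $w'$, Lemma \ref{thm33} applies verbatim to give that $\mathbb{M}_{w',\Lambda}$ is the g-frame operator of $\{\sqrt{w'_i}\Lambda_i\}$, which is a g-frame by Lemma \ref{thm32}, hence positive and invertible. Chaining these gives the full equivalence.

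I do not expect any genuine obstacle here; the statement is essentially a transcription of the corresponding frame-theoretic equivalence (the analogue of Theorem \ref{thm34} for ordinary frames with a weight sequence) through the dictionary of Theorem \ref{thm5}, Proposition \ref{thm14}, and Lemmas \ref{thm32}--\ref{thm33}. The only points requiring a little care are bookkeeping ones: making sure the square-root splitting $w_i = (\sqrt{w_i})^2$ is used consistently so that $\mathbb{M}_{w,\Lambda}$ is matched with the g-frame operator of $\{\sqrt{w_i}\Lambda_i\}$ and not of $\{w_i\Lambda_i\}$; checking that semi-normalization of $w$ (and of the auxiliary $w'$ in (v)) is what guarantees the rescaled bounds stay in $(0,\infty)$; and confirming that the frame-level result being cited is stated for weight sequences in the generality needed (positive semi-normalized weights), which it is in \cite{p.Balazs.06}. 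Since the paper explicitly says this theorem "can be proved in exactly the same fashion as above," the write-up can be kept short, essentially pointing to Lemmas \ref{thm32} and \ref{thm33}, Theorem \ref{thm5}, and Proposition 4.2 of \cite{p.Balazs.06}, and then listing the implications (i)$\Rightarrow$(iv)$\Rightarrow$(ii)$\Rightarrow$(i), (i)$\Leftrightarrow$(iii), (i)$\Leftrightarrow$(vi), (i)$\Rightarrow$(v)$\Rightarrow$(ii).
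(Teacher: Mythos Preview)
Your proposal is correct and matches the paper's approach exactly: the paper gives no explicit proof but simply says the theorem ``can be proved in exactly the same fashion as above,'' meaning via the induced-sequence dictionary (Theorem~\ref{thm5}, Proposition~\ref{thm14}) together with Lemmas~\ref{thm32}--\ref{thm33} and the corresponding frame result in \cite{p.Balazs.06}, which is precisely what you do. One trivial bookkeeping slip: in your direct estimate for (i)$\Leftrightarrow$(iii) the bounds should be $a,b$ rather than $a^2,b^2$, since $a\le w_i\le b$ and $\|\Lambda_i f\|^2=\sum_k|\langle f,\psi_{i,k}\rangle|^2$.
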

%\section{Acknowledgment}
%The authors would like to thank the referee for useful and helpful comments and suggestions.

%------------------------------------------------------------------------------------------------

\end{document}